\newcommand{\PPP}{\mathbb{P}}
\newcommand{\R}{\mathbb{R}}
\newcommand{\E}{\mathbb{E}}
\newcommand{\0}{\mathbf{0}}
\newcommand{\1}{\mathbf{1}}
\newcommand{\rrr}{\mathbf{r}}
\newcommand{\x}{\mathbf{x}}
\newcommand{\uuu}{\mathbf{u}}
\newcommand{\vvv}{\mathbf{v}}
\newcommand{\I}{\bm{I}}
\newcommand{\M}{\bm{M}}
\newcommand{\N}{\bm{N}}
\newcommand{\K}{\bm{K}}
\newcommand{\V}{\bm{V}}
\newcommand{\W}{\bm{W}}
\newcommand{\A}{\bm{A}}
\newcommand{\B}{\bm{B}}
\newcommand{\C}{\bm{C}}
\newcommand{\G}{\bm{G}}
\newcommand{\DD}{\bm{D}}
\newcommand{\PP}{\bm{P}}
\newcommand{\RR}{\bm{R}}
\newcommand{\T}{\bm{T}}
\newcommand{\ep}{\varepsilon}
\newcommand{\epv}{\bm{\ep}}
\newcommand{\Dada}{\bm{\Delta}}
\newcommand{\diag}{\mathrm{diag}}
\newcommand{\rk}{\mathrm{rank}\,}
\newcommand{\Var}{{\mathrm{Var}}}
\newtheorem{theorem}{Theorem}
\newtheorem{proposition}{Proposition}
\newtheorem{remark}{Remark}
\newtheorem{corollary}{Corollary}
\title{Structure and Noise in Dense and Sparse Random Graphs:
   Percolated Stochastic Block Model via the EM Algorithm and
 Belief Propagation with Non-Backtracking Spectra}
\author{Marianna Bolla \thanks{Department of Stochastics, Budapest University
 of Technology and Economics, M{\H u}\-egyetem rkp. 3, Budapest 1111, Hungary.
 E-mail: marib@math.bme.hu} 
\and Hannu Reittu \thanks{VTT Technical Research Center of Finland, Tietotie 3. FI-2150 Espoo, Finland, E-mail: hannu.reittu@vtt.fi} 
\and  Runtian Zhou \thanks{Budapest Semesters in Mathematics; also at Duke
University, 2138 Campus Dr, Durham, NC 27708, USA. E-mail: rz169@duke.edu}
}
\begin{document}

\maketitle

\section*{Abstract}

In this survey paper it is illustrated
how spectral clustering methods for unweighted graphs
are adapted to the dense and sparse regimes. 
Whereas Laplacian and modularity based spectral clustering is apt to dense
graphs, recent results show that for sparse ones,
the non-backtracking spectrum is
the best candidate to find assortative clusters of nodes.
Here belief propagation in the sparse stochastic block model is derived 
with arbitrarily given model parameters that results in a non-linear system of
equations; with linear approximation, the spectrum of the non-backtracking
matrix is able to specify the number $k$ of clusters. Then the 
model parameters themselves can be estimated by the EM algorithm.

Bond percolation in the assortative model is  considered in the following
two senses: the within- and between-cluster edge probabilities
decrease with the number of nodes and  edges coming into existence in this 
way are retained with probability $\beta$. As a consequence, the optimal
$k$ is the number of the structural real eigenvalues
(greater than $\sqrt{c}$, where $c$ is the average degree) 
of the non-backtracking matrix of the graph.
Assuming, these eigenvalues $\mu_1 >\dots > \mu_k$ are distinct, the
multiple phase
transitions obtained  for $\beta$ are $\beta_i =\frac{c}{\mu_i^2}$; further, at
$\beta_i$ the number of detectable clusters is $i$, for $i=1,\dots ,k$.
Inflation--deflation techniques are also discussed to classify the nodes
themselves, which can be the base of the sparse spectral clustering.
Simulation results, as well as real life examples are presented.

\vskip0.2cm  
\noindent
\textbf{Keywords}: stochastic block model, belief propagation,
EM algorithm, non-backtracking  spectra, bond percolation.

\vskip0.2cm
\noindent
\textbf{Mathematics Subject Classification}: 05C50, 05C80, 62H30 

\section{Introduction}\label{intro}

In~\cite{Bolla20} we considered generalized quasirandom properties of
expanding quasirandom graph sequences, which are deterministic
counterparts of generalized random graphs, where the probabilities of edges
coming into existence only depend on the cluster memberships of the endpoints,
and are fixed during the expansion. 
If the number of the underlying
clusters (number of nodes of the so-called model graph of~\cite{LovSos})
is $k$, then a dense graph sequence is obtained in this way (the average
degree of nodes is proportional to $n$, %but it is greater than $\log n$,
where $n$ is the number of
nodes at a stage of the expansion). We proved that
the adjacency matrix of such a random or quasirandom graph $G_n$ coming from
the $k$-cluster model
has $k$ structural eigenvalues (of order $n$), while
the others are $o(n)$ in absolute value. Also, the normalized modularity
matrix defined in~\cite{Bolla11} (its spectrum is in [-1,1]) has $k$ eigenvalues
separated from 0, whereas the others are $o(1)$. Further, the clusters
can be recovered by the weighted k-means algorithm applied to the
$(k-1)$-dimensional representatives of the nodes obtained via the
eigenvectors corresponding to the $k-1$ non-trivial structural eigenvalues
(more precisely, the objective function of the k-means clustering is
$o(1)$).
In case of expanding dense regular graphs,
the coordinates of any adjacency eigenvector
(except the trivial one) are concentrated around certain values
(according to a Gaussian distribution), keeping the orthogonality,
see~\cite{Backhausz}. 
However, in case of sparse graphs, the eigenvectors of
the usual graph-based matrices are concentrated on the high degree nodes,
see, e.g.,~\cite{Benaych}, and so, they are not able to cluster the nodes.

Consequently, the adjacency, Laplacian, or modularity matrix based
methodology is not applicable in the sparse case, see~\cite{Stephan}.
To consider this situation, the edge probabilities are rescaled,
with $n$ or $\log n$. These possibilities are discussed in~\cite{Abbe}.
In~\cite{Mossel,MosselComb}, the authors consider scaling with $n$, and
refine the block model threshold conjecture; they specify  when
information theoretically is possible or impossible to distinguish between the
clusters, depending on the model parameters. This so-called Kesten--Stigum
threshold will be discussed in Section~\ref{pre}.
Bickel and Chen~\cite{Bickel} give a nonparametric statistical view of
network models, and consider consistency in the strong or weak sense,
when the estimated cluster assignment approaches the true one as
$n\to\infty$. They prove that the likelihood modularity is always
consistent, while the Newman--Girvan modularity is consistent only
under stronger assumptions.  They also consider different scalings.
Other papers, instead of consistency, define different measures
for the agreement of the true and the recovered clusterings (correlation,
proportions of misclassifications, and information theoretical measures,
e.g., the Kullbach--Leibler divergence).
In~\cite{Benaych}, the transition happens at $o (\log n )$ average degrees.
At $\Omega (\log n)$ (e.g., poly-$\log n$) average degrees the
paper~\cite{Coste} establishes results %, and there are many possibilities
in this intermediate regime.

In the Laplacian or modularity based
spectral clustering~\cite{Bolla13}, $k$ is specified via spectral gaps,
and the distance between the eigen-subspace spanned by the eigenvectors
corresponding to the structural eigenvalues and that of the step-vectors is
estimated by the Davis--Kahan theorem (see, e.g.,~\cite{Stewart}).
As the square of this distance is
the objective function of the k-means algorithm, this is at our
disposal to find  the clusters approximately in the dense case.
This concept can be extended to non-symmetric matrices, like to the
non-backtracking one, and due to theorems of~\cite{Stephan} that use
Bauer--Fike type perturbation results, similar
subspace perturbations are applicable to sparse graphs too.

In the sparse case we scale with $n$, so the average degree of the graph is of
constant order. The so obtained sparse stochastic block model (called
$SBM_k$)
%behind an expanding
%graph sequence (the number $n$ of nodes is increasing) with given number 
%($k$) of clusters which are
is assortative in
the sense that the within-cluster probability of two nodes being connected
is higher than the between-cluster one. 
Here bond percolation has two meanings:
the first is that the within- and between-cluster edge probabilities
decrease with $n$, and the second is that edges coming into existence in this 
way are retained with probability $\beta$. A real-life 
social network is an instance of such an expanding sequence, where
two persons are connected randomly, with 
probability depending on their cluster memberships and this probability is
the ``smaller'' the ``larger'' the network is; 
further, along these connections an information (e.g., an infection)
is transmitted only with probability $\beta$ (which is related to the 
seriousness of the epidemic, akin to the recombination ratio).

The question is that for which values of the model parameters and $\beta$
the  $k$ clusters can be distinguished with high probability (w.h.p.). 
If we knew $k$, the model parameters could be estimated with the 
EM algorithm, see~\cite{Bolla13}. 
The method of belief propagation (BP) is able to
relate the model parameters to the non-backtracking spectrum of the graph,
see~\cite{Decelle,Karrer,Krzakala,Torresetal}.
However, the detectability thresholds 
have been given only in special cases (e.g., in the symmetric case
of~\cite{Decelle,Moore}, to be described in Section~\ref{pre}). 
In the present paper, BP is derived in the generic case; 
it provides a non-linear system of equations for the 
conditional probabilities of the
nodes belonging to the clusters (given their neighbors), where the number of
equations is $2mk$ ($m$ is the
number of edges in the underlying graph). Though, there are numerical
algorithms to solve this system of equations, 
BP is mainly important from
theoretical point of view: via linearization and stability issues, 
our Theorem~\ref{bpgeneral} establishes a close
relation between the model parameters and the structural eigenvalues of the
non-backtracking matrix $\B$ of the
graph under mild assumptions. In this way, the number $k$ of clusters is
approximately equal to the number of the structural eigenvalues of $\B$;
but for given $k$, it is the EM algorithm that has the massive theoretical 
background to estimate the model parameters themselves.

We also experienced that, surprisingly, the leading $k$ (structural) eigenvalues
of the expected adjacency matrix
of a random graph coming from the sparse $SBM_k$
model are w.h.p. closer to the structural eigenvalues of $\B$
than to those of the adjacency matrix $\A$ of a randomly generated graph
from this model,  under some
balancing conditions for the cluster sizes and average degrees of the clusters. 
This seems to contradict to the laws of large numbers, but in the sparse 
case (possessing nearly constant average degrees) 
it is supported by %computations, simulations, and  also by
theoretical considerations as for deformed Wigner matrices
(see, e.g.,~\cite{Capitaine,Stephan}).
Roughly speaking, here the amount of the noise in the data suppresses the
structure, and it is impossible to find the clusters with the traditional
spectral clustering methods. Luckily, in this case,
the non-backtracking matrix is at our disposal.

Since the structural eigenvalues of $\B$ and those of
the expected adjacency matrix
are closely aligned, the so-called detectability threshold and
the phase transitions for $\beta$ also depend on these eigenvalues.
However, in practical situations, only the non-backtracking spectrum is at our
disposal and we do not know the model parameters. 
Based on this, we conclude the value of $k$ and
derive multiple phase transitions  in the general  $SBM_k$ model, where,
in addition, the edges of the graph coming from this model
are retained with probability $\beta$, 
and we have the further percolated sparse stochastic block model 
$SBM_k^{\beta}$ too. 

The rest of the paper is organized as follows.
In Section~\ref{pre},  properties of the non-backtracking matrix are
discussed together with basic notions and notation; further, the
sparse stochastic block model and bond-percolation models are introduced.
In Section~\ref{bp}, we perform BP theoretically which shows that $k$ is about
the number of the structural eigenvalues of the non-backtracking matrix $\B$
that are larger than $\sqrt{c}$, where $c$ is the average degree of the
random graph $G_n \in SBM_{k}$;
it is Theorem~\ref{bpgeneral} that supports this choice of $k$.
In Section~\ref{em}, we run the EM algorithm to estimate the parameters
 of the $SBM_k$ model that fits to our observed graph $G_n$.
In the possession of the new estimates for the clusters and their average
degrees, we adjust the number $k$ so that possible eigenvalues of $\B$
around $\sqrt{c}$ are added or deleted, and $k_0$ denotes the new number of
clusters. In Section~\ref{id}, we use inflation--deflation techniques for this
purpose and apply a theorem of~\cite{Stephan} to classify the majority of
nodes by the k-means algorithm; this issue is summarized in Theorem~\ref{kvar},
further, by a corollary and remark after it.
Eventually, we find number $k$ of phase transitions for $\beta$
in the $SBM_k^{\beta}$ model, when
$1,2,\dots ,k$ clusters can be detected and supported by
simulations in Section~\ref{betaperc}.
In Section~\ref{appl}, real life application for quantum chemistry graphs is
presented.
Section~\ref{conc} is devoted to conclusions and further perspectives.

\section{Preliminaries}\label{pre}

\subsection{Non-backtracking matrix of unweighted graphs}

The \textit{Hashimoto edge-incidence matrix}, with other words, 
\textit{non-backtracking matrix}
$\B$ of a simple graph $G$ on $n$ nodes and $m$ edges  is defined as
a $2m \times 2m$ 
non-symmetric matrix of 0-1 entries, see~\cite{Karrer,Krzakala,Martin}
in context of non-backtracking random walks, community detection, and 
centrality. The general entry of $\B$ is
$$
 b_{ef}= \delta_{e\to f} \delta_{f \ne e^{-1}} ,
$$
for $e,f \in E^{\rightarrow }$, where $E^{\rightarrow }$ is the set of bidirected
edges of $G$ (each existing edge is considered in both possible directions),
and for $e=(e_1 ,e_2)$ the reversely directed edge is denoted $e^{-1}$, so
$e^{-1} = (e_2 ,e_1 )$; further, the $e\to f$ relation means that $e_2 =f_1$
and $\delta$ is the (1-0) indicator of the
event in its lower index. Therefore, $b_{ef} =1$ exactly when $e_2 =f_1$ and
$f_2 \ne e_1$. 

With other notation (used in Section~\ref{bp}) referring to the nodes, we have
$$
 b_{l \rightarrow s, \, j \rightarrow i } = \delta_{sj } (1-\delta_{il} ) ,
$$
where $\delta$ is now the Kronecker-delta. So
$b_{l \rightarrow s, \, j \rightarrow i }  =1$ if and only if for the quadruple
in the lower indices, $l \rightarrow s=j \rightarrow i$ holds with $l\ne i$. 
Since $\B$ is a real matrix of non-negative entries,
it has a largest absolute value eigenvalue
 which is positive real, so it is the spectral radius $\rho (\B )$
 of $\B$; and $\B$ can also have some other so-called structural
real eigenvalues (those are positive in assortative networks). 
Since the characteristic polynomial of $\B$ has real coefficients, its
complex eigenvalues occur in conjugate pairs in the bulk of its spectrum.
Note that the underlying simple graph is not directed, just we consider
its edges as bidirected for the purpose of the message passing equations
of Section~\ref{bp}. 

Here we enumerate some results about the spectral properties of $\B$,
see~\cite{Glover21} for more details and proofs.
Note that the spectrum of $\B$ is not sensitive to high-degree nodes,
because a walk starting at a node cannot %turn around and
return to it
immediately. Also trees, disconnected from the graph or dangling off it,
contribute zero eigenvalues to the non-backtracking spectrum, as a
non-backtracking random walk when forced to a leaf, must stop there.
%is forced to a leaf of the tree, from where it has nowhere to go. 
%We can also show that unicyclic components yield eigenvalues
%that are 0,1, or -1. 

Some important properties of $\B$ are summarized here.
\begin{itemize}
\item
The frequently used Ihara's formula (see \cite{Glover21,Krzakala,Martin})
implies  that, whenever $G$ is not a tree $(m\ge n)$,
$\B$ has $m-n$ eigenvalues equal to 1 and 
$m-n$ eigenvalues equal to $-1$, whereas its further eigenvalues are those of 
the   $2n\times 2n$ matrix
\begin{equation}\label{K}
\K = \begin{pmatrix}
 \bm O  & \DD -\I_n \\
 -\I_n  & \A
 \end{pmatrix} , 
\end{equation}
where $\A$ is the adjacency- and $\DD =\diag (d_1 ,\dots ,d_n )$
is the diagonal degree-matrix of the
graph (it contains the row-sums of $\A$, i.e., the node-degrees $d_i$'s of $G$
in its main diagonal).

Note that $\K$ always has at least one additional eigenvalue 1, the geometric
multiplicity of which is equal to the number of the connected components
of the underlying graph (see Proposition 5.2 of~\cite{Glover21}).

\item
  For the spectral radii %largest (positive real) eigenvalues
  of $\A$ and $\B$, the relation
\begin{equation}\label{ab}
 \rho (\B )  \le \rho (\A )
\end{equation}
holds (see~\cite{Martin}).

\item If $G$ is connected with $d_{min} \ge 2$, then
 for all eigenvalues of $\B$, $|\mu | \ge 1$ holds.
 In particular, if $G$ has no nodes of degree  2, then the
 eigenvalues of $\B$ with $|\mu |=1$ are $\pm 1$'s.

\item
Proposition 5.5 of~\cite{Glover21} states that if $G$ is a connected graph which
is not a tree or cycle and
$d_{\min} \ge 2$, then $\rho (\K )> 1$.
In general (except for trees and cycles), the spectral radius $\rho (\B) >1$.
Also, by the Gershgorin theorem, for a connected graph,
$\rho (\B ) \le d_{\max }-1$ (with equality if and only if $G$ is regular).

\item
If $G$ is a connected graph that is not a cycle and $d_{\min} \ge 2$, then
$\B$ is irreducible. Therefore, the Frobenius theorem is applicable
to $\B$, and under the above conditions, it has a single positive real
eigenvalue among its maximum absolute value ones
with corresponding eigenvector of all positive real coordinates. 

 \item
Usually only the eigenvalue 0 of $\B$ is defected (its geometric and
  algebraic multiplicity is not the same), like trees, otherwise the
  eigenvectors corresponding to non-zero eigenvalues (even to
  multiple ones) are linearly independent.
  In case of random graphs, e.g., in the stochastic block model to be discussed,
  there is a bulk of the spectrum (containing $\pm 1$'s and complex conjugate
  pairs), the other
  so-called structural eigenvalues (greater than $\sqrt{c}$) are real
  (positive in the assortative case) and the corresponding eigenvectors are
  nearly orthogonal, see~\cite{Bordenave}, and the one with the largest
  absolute value, giving $\rho (\B)$ is single.

\item
  Though $\B$ is not a normal matrix, even not always diagonalizable
  (the algebraic and
geometric multiplicity of some of its eigenvalues may not be the same), 
it exhibits some symmetry. Observe that
$$
b^*_{ef} = b_{fe } = b_{e^{-1} \, f^{-1} }
$$
%$$
%b^*_{k \rightarrow l, \, j \rightarrow i } =
%b_{j \rightarrow i, \, k \rightarrow l } = b_{l \rightarrow k, \, i \rightarrow j }
%$$
for each directed pair of edges and for every entry $b^*$ of the transpose 
$\B^*$ of $\B$. This phenomenon can be described by involution and swapping.

In the sequel, the vectors are column vectors and ${}^*$ denotes the
adjoint of a matrix or a vector (in case of real entries, it is the usual
transposition).

\item
  Introduce the notation
$$
 {\breve x}_e := x_{e^{-1}} ,  \quad  e\in  E^{\rightarrow } 
$$
for relating the coordinates of the $2m$-dimensional vectors
$\x$ and $\breve \x$  of $\R^{E^{\rightarrow }}$. Now $\x$ is
partitioned  into two $m$-dimensional vectors $\x_1$ and $\x_2$, where
the coordinates of $\x_1$ correspond to $j\to i$ edges with $j<i$ and those of
$\x_2$ correspond to their inverses.
If $\x =(\x_1^* , \x_2^*)^*$, then ${\breve \x } =(\x_2^* , \x_1^*)^*$,
i.e., $\x$ and $\breve \x$ can be obtained from each other 
by swapping the set of their first $m$ and second $m$ coordinates.

Let $\V$ denote the following involution on $\R^{2m}$ ($\V =\V^{-1}$,
$\V^2 =\I$, $\V$ is an orthogonal and symmetric matrix at th same time):
$$
 \V =\begin{pmatrix}
 \bm O  & \I_m \\
 \I_m  & \bm O
\end{pmatrix} ,
$$
where the blocks are of size $m\times m$. With it, 
$\V \x = {\breve \x}$ and $\V {\breve \x }= \x$.

\item Relation to the line-graph:
  
\begin{proposition}\label{linegraph}
Let us partition the non-backtracking matrix $\B$ of the connected simple 
graph $G$ on $n$ nodes and $m$ edges into the following four $m\times m$
quadrants:
$$
 \B =\begin{pmatrix}
 \B_{11}  & \B_{12} \\
 \B_{21}  & \B_{22} 
\end{pmatrix} ,
$$
where the first $m$ rows (columns) correspond to the $j\to i$ edges
with $j<i$, and the next $m$ rows (columns) to their inverses,
in the same order. Then
\begin{equation}\label{B}
\B_{11}^* =\B_{22}, \quad \B_{22}^* =\B_{11}, \quad \B_{12}^* =\B_{12},
\quad \textrm{and} \quad \B_{21}^* =\B_{21}.
\end{equation}
Further, $\B_{11} + \B_{12} + \B_{21} + \B_{22}$ is equal to the 
$m\times m$ adjacency matrix of the line-graph of $G$.
\end{proposition}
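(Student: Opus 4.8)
The plan is to derive both assertions from the single structural identity $\B^{*}=\V\B\V$, which is merely the matrix form of the swapping-and-involution symmetry already recorded above. Indeed, the stated relation $b^{*}_{i\leftarrow j,\,k\leftarrow l}=b_{j\leftarrow i,\,l\leftarrow k}$ says exactly that $(\B^{*})_{e,f}=\B_{e^{-1},f^{-1}}$ for all directed edges $e,f\in E^{\rightarrow}$. Since $\V$ is the permutation sending $e\mapsto e^{-1}$, so that $(\V)_{e,f}=1$ iff $f=e^{-1}$, one computes $(\V\B\V)_{e,f}=\B_{e^{-1},f^{-1}}$, and therefore $\B^{*}=\V\B\V$.

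For the first part I would simply expand $\V\B\V$ in the $2\times2$ block form. Using the block form of $\V$ given above, multiplying $\B$ on both sides by $\V$ swaps its two block-rows and its two block-columns, so
\[
\V\B\V=\begin{pmatrix}\B_{22}&\B_{21}\\\B_{12}&\B_{11}\end{pmatrix},\qquad
\B^{*}=\begin{pmatrix}\B_{11}^{*}&\B_{21}^{*}\\\B_{12}^{*}&\B_{22}^{*}\end{pmatrix}.
\]
Matching the four blocks of the equality $\B^{*}=\V\B\V$ reads off $\B_{11}^{*}=\B_{22}$, $\B_{22}^{*}=\B_{11}$, $\B_{12}^{*}=\B_{12}$, and $\B_{21}^{*}=\B_{21}$, which is precisely~\eqref{B}.

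For the line-graph identity I would fix, for each undirected edge, a ``forward'' orientation $e_p$ (first block) and its reverse $e_p^{-1}$ (second block), so that the $(p,q)$ entry of $\B_{11}+\B_{12}+\B_{21}+\B_{22}$ equals the sum $b_{e_p,e_q}+b_{e_p,e_q^{-1}}+b_{e_p^{-1},e_q}+b_{e_p^{-1},e_q^{-1}}$ over the four orientation pairs; note this sum is manifestly independent of which orientation was chosen as forward. By the defining formula each summand is $1$ exactly when the column edge's head equals the row edge's tail and the step does not immediately backtrack, so the entry counts the non-backtracking length-two walks traversing some orientation of edge $q$ and then some orientation of edge $p$. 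A short case analysis finishes the argument: if $p=q$, every orientation pair is a backtrack and the entry is $0$ (so the diagonal vanishes); if $p\neq q$ share no vertex, no orientation pair connects and the entry is $0$; and if $p\neq q$ share exactly one vertex $w$, there is a unique orientation of $q$ ending at $w$ and a unique orientation of $p$ leaving $w$, whose remaining endpoints differ because $G$ is simple (two distinct edges share at most one vertex), so this single step is non-backtracking and exactly one summand equals $1$. Hence the entry is $1$ precisely when $p\neq q$ and the edges meet, i.e. the sum is the adjacency matrix of the line-graph $L(G)$.

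I expect the only delicate point to be this last case analysis, specifically verifying that two distinct adjacent edges admit exactly one orientation combination giving a non-backtracking step. This is exactly where simplicity of $G$ enters: it forces the two ``other endpoints'' to be distinct, which both rules out double counting and guarantees the unique contributing term genuinely satisfies the non-backtracking constraint $i\neq l$. The block-symmetry part, by contrast, is immediate once $\B^{*}=\V\B\V$ is in hand.
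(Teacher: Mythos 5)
Your proposal is correct, and it differs from the paper's proof in an instructive way. For the block relations~\eqref{B}, the paper argues entrywise using the lexicographic labeling: it shows $\B_{11}$ is upper- and $\B_{22}$ lower-triangular with a bijection between their nonzero entries (an $e\to f$ relation in one matching an $f\to e$ relation in the other), and proves symmetry of $\B_{12}$, $\B_{21}$ by pairing the two directions of each matched edge pair. You instead compress the edge-reversal symmetry into the single matrix identity $\B^{*}=\V\B\V$ and read off all four relations at once by block multiplication; this is cleaner, actually exploits the involution $\V$ that the paper introduces but never uses in its own proof, and avoids the ordering-dependent triangularity claims entirely. For the line-graph identity, both arguments are essentially the same count over the four orientation pairs of two undirected edges, with exactly one pair contributing when the edges are distinct and adjacent; your version is the more careful one, since you treat the diagonal case $p=q$ explicitly (the paper leaves the vanishing of the diagonal implicit) and you isolate exactly where simplicity of $G$ enters. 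One small imprecision: in the $p=q$ case, only two of the four orientation pairs are genuine backtracks; the other two already fail the head-to-tail incidence condition ($\delta_{jk}=0$ since $G$ has no loops). The conclusion that the diagonal entry is $0$ is unaffected.
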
  

Note that the nodes of the line-graph of $G$ are its edges, and two  edges are 
adjacent if and only if they have a node in common.
Let $\N$ denote the $n\times m$ node--edge (0-1) incidence matrix of
the simple graph $G$ (except for trees, $n\le m$). Then $\N \N^*$ and
$\N^* \N$ are Gramians (covariance matrices), so they are positive
semidefinite, have the same rank and the same set of positive eigenvalues. 
$\N \N^* =\DD + \A$ is called signless Laplacian (whereas, the Laplacian
matrix is $\DD -\A$, see~\cite{Bolla13}). On the other hand,
$\N^* \N - 2 \I_m$ is the adjacency matrix of the line-graph of $G$
(because the valence of each edge is 2). Consequently, its 
eigenvalues are those of the positive eigenvalues of $\DD +\A$ decreased by 2,
and the others
are equal to -2, see~\cite{Haemers} for more explanation.
Furthermore (see, e.g.,~\cite{Lovasz}) 
it is known that if the line-graphs of two simple
graphs, provided they both have node-degrees at least 4,  are isomorphic,
then they
are isomorphic too. However, if the degree condition does not hold,
it can happen that two not isomorphic graphs have isomorphic line-graphs.
For example, if we have a triangle and a star on 4 vertices, then the
adjacency matrix of their line-graphs is the same $3\times 3$ matrix
(adjacency matrix of a complete graph, as any two edges of them have
a node in common). The same happens if we have graphs with such
dangling triangles or stars. %and in case of sparse graphs.
Therefore, in case of sparse graphs, $\B$  carries more information for the
graph  than  its line-graph; further,
considering its  edges in both
directions makes it possible to perform non-backtracking random walks and
message passing along them, see Section~\ref{bp}.

%%%%%%%%%%%%%%%%%% new

In~\cite{Mulas} it is proved that two simple graphs are isomorphic if and
only if their corresponding non-backtracking graphs are isomorphic
(their non-backtracking matrices are the same if we consider the
bidirected edges
in the same order). Under non-backtracking graph we understand the directed
graph on
$2m$ vertices with adjacency relation corresponding to the definition of the
non-backtracking matrix.

%%%%%%%%%%%%%%%%%% end new

\begin{proof} (Proposition~\ref{linegraph})
  
Let us label the rows (and columns) of $\B$ with the edges, ordered 
lexicographically: the first $m$ rows correspond to the $j\to i$ edges 
$(1\le j<i \le n)$, 
 whereas the second $m$ ones to the inverse edges $i\to j$, in the same order.
Obviously, $\B_{12}$ is a symmetric matrix as its corresponding upper- and
lower-diagonal entries, producing an entry 1, contain the
matching %$e\to f$ and $f\to e$
edges in both directions;
%(here $e\to f$ is the
%shorthand for the fact that the end-node of $e$ is the start-node of $f$, and
%the start-node of $e$ is different of the end-node of $f$); 
$\B_{21}$ is a symmetric matrix for the same reason.
$\B_{11}$ is upper-, $\B_{22}$ is lower-triangular, and they are
transposes of each other, because there is a one-to one correspondence
between the upper-diagonal entries equal to 1 of $\B_{11}$ and the 
lower-diagonal entries equal to 1 of $\B_{22}$ such that one contains an 
$e\to f$ and  the other an $f\to e$ relation. The other entries are equal to 0. 

The above argument shows that in the same position, at most only one entry can 
be equal to 1 in the four quadrants of $\B$, which means that the
corresponding adjacency entry of the line-graph of $G$ is 1; 
the others are zeros. 
It is also obvious that the undirected $\{ l,j \}$ and $\{ j,i \}$ edges can
be joined at $j$ in four possible ways: 
$i \to j\to l$, $l \rightarrow j \leftarrow i$, 
$i \leftarrow j\rightarrow l$, and $l \to j\to i$. Only one of them
results in the true relation $\{ l\to j \} \to \{ j \to i\}$.
This proves the second statement. The sum of the four quadrants of $\B$ 
is of course symmetric, because of the first statement.
\end{proof}

As a consequence of Proposition~\ref{linegraph},
$$
 (\B^{\ell })^* \V = \V \B^{\ell} 
 $$
 holds true for any natural number $\ell$. Applying this for $\ell =1$,
 the relation $\B^* = \V \B \V$ and 
 $$
 \B^* {\breve \x} = \breve{(\B \x)} 
 $$
 is valid for any vector $\x \in \R^{2m}$.
This phenomenon is called PT (parity-time) invariance is physics.
 
This implies the following: if $\x$ is a right eigenvector of $\B$ with
a real eigenvalue $\mu$, then $\breve \x$ is a right eigenvector of $\B^*$
with the
 same eigenvalue. Consequently, if $\x$ is a right eigenvector of $\B$, then
 $\breve \x$ is a left eigenvector of it (and vice versa), with the same
 real eigenvalue.
% This usually does not hold for the right and left eigenvectors of  $\K$.

 Another easy implication is that $(\B \V )^* =\V \B^* =\V \V \B \V =\B \V$,
 so $\B \V$  and $\V \B$ are 
 symmetric matrices that also follows by~\eqref{B}. Indeed,
 $$
 \B \V =\begin{pmatrix}
 \B_{12}  & \B_{11} \\
 \B_{22}  & \B_{21} 
\end{pmatrix} ,
\quad
 \V \B  =\begin{pmatrix}
 \B_{21}  & \B_{22} \\
 \B_{11}  & \B_{12} 
\end{pmatrix} .
$$

Consequently, $\B \V$  is diagonalizable in an orthogonal basis, and so,
the singular values of $\B$ are the absolute values of its real eigenvalues,
whereas, the singular vector pairs are the eigenvectors and their swappings.
Namely, the leading singular values are $d_i -1$ for $i=1,\dots ,n$, and so,
they are completely determined by the degree sequence of $G$. However,
the eigenvalues of $\B$ are quite different.

%for any integer $\ell \ge 0$,
%$$
% \B^{\ell } \V = \V \B^{\ell} .
%$$
%So $\B^{\ell } \V$ is a symmetric matrix with eigenvalues $\sigma_{j,\ell}$ and
%orthonormal set of eigenvectors $\x_{j,\ell}$ for $1\le j  \le 2m$. From here,
%$$
%  \B^{\ell } =\sum_{j=1}^{2m} \sigma_{j,\ell} \x_{j,\ell} {\breve \x}_{j,\ell} %.
%$$
%Since $\V$ is also an orthogonal matrix, the vectors  ${\breve \x}_{j,\ell}$
%also form an orthonormal basis in  $\R^{E^{\rightarrow }}$.
%This gives rise to the SVD of  $\B^{\ell }$, see~\cite{Bordenave}.

\item
  But what is the relation between the eigenvectors corresponding to the 
leading (real) eigenvalues of $\B$ and $\K$?
  To ease the discussion, two auxiliary matrices,
  defined in~\cite{Stephan} will be used:
  the $2m\times n$ \textit{end matrix} $\bm{End}$ has entries
  $end_{ei} =1$ if $i$ is the end-node of the (directed) edge $e$ and 0,
  otherwise;
  the $2m\times n$ \textit{start matrix} $\bm{Start}$ has entries
  $start_{ei} =1$ if $i$ is the start-node of the (directed) edge $e$ and 0,
  otherwise. Then for any vector $\uuu \in \R^n$ and for any edge
  $e=\{ i\to j \}$, the following holds:
  $$
  ( \bm{End}\, \uuu )_e = u_j \quad \textrm{and} \quad
  ( \bm{Start}\, \uuu )_e =u_i .
  $$
  Consequently,  $\bm{End} \, \uuu$ is the $2m$-dimensional inflated version
  of the $n$-dimensional vector $\uuu$, where the coordinate $u_j$ of $\uuu$
  is repeated as many times, as many edges have end-node $j$; likewise,
 in the $2m$-dimensional inflated vector $\bm{Start} \, \uuu$, the coordinate 
  $u_i$ of $\uuu$
  is repeated as many times, as many edges have start-node $i$.
  As each edge is considered in both possible directions,
  this numbers are the node-degrees $d_j$ and $d_i$, respectively.
 % Note that if for some $k<n$, the coordinates of $\uuu$ form $k$
 % well-separated clusters, then so do the coordinates of the inflated vector.
 Note that 
\begin{equation}\label{es}
\bm{End}^* \, \bm{End}=\bm{Start}^* \, \bm{Start} =
 \diag (d_1 ,\dots ,d_n ) =\DD .
\end{equation}

Akin to~\cite{Krzakala},
%the following relation between
%  the eigenvectors corresponding to the leading eigenvalues of $\B$ (the same
%  as those of $\K$) is proved.
for any vector  $\x \in \R^{2m}$  define
 $$
  x_i^{out} := \sum_{j: \, j\sim i} x_{i\to j}  \quad \textrm{and}  \quad
  x_i^{in} := \sum_{j: \, j\sim i} x_{j\to i}, \quad i=1,\dots ,n.
$$
We put these coordinates into the $n$-dimensional
(column) vectors $\x^{in}$ and $\x^{out} $.
Trivially,
\begin{equation}\label{inout}
  \x^{out} =\bm{Start}^* \x, \quad \textrm{and} \quad
  \x^{in} = \bm{End}^* \x , \quad i=1,\dots ,n.
\end{equation}

For them, we have the following relations (now more comfortably, we use
$\B^*$ instead of $\B$):
%with the shorthand $f\to e$ for
%the fact that $f_2 =e_1$.
\begin{equation}\label{rel1}
\begin{aligned}
  (\B^* \x)^{out}_i &= \sum_{e: \, e_1 =i}  (\B^* \x)_e = 
 \sum_{e: \, e_1 =i} \sum_{f\to e , \, f \ne e^{-1}}  x_f \\
&= \sum_{e: \, e_1 =i}  [\sum_{f\to e}  x_f -  x_{e^{-1}} ]  \\
&=\sum_{f: \, f_2 =i} x_f \sum_{e: \, e_1 =i} 1 - 
   \sum_{e: \, e_1 =i}  x_{e^{-1}}  \\
&= x_i^{in} d_i -\sum_{e: \, e^{-1}_2 =i}  x_{e^{-1}} = 
    d_i x_i^{in} -x_i^{in}= (d_i -1 ) x_i^{in} .
\end{aligned}
\end{equation}
Also,
\begin{equation}\label{rel2}
\begin{aligned}
  (\B^* \x)^{in}_i &= \sum_{e: \, e_2 =i}  (\B^* \x)_e = 
 \sum_{e: \, e_2 =i}  \sum_{f\to e , \, f \ne e^{-1}}  x_f \\
&=\sum_{j=1}^n a_{ji} \sum_{f: \, f_2 =j, \, f_1 \ne i}  x_f  \\
&=\sum_{j=1}^n a_{ji} \sum_{f: \, f_2 =j}  x_f 
   -\sum_{j=1}^n a_{ji} x_{i\to j}\\
   &=\sum_{j=1}^n a_{ij} x^{in}_j -  \sum_{j: \, j\sim i} x_{i\to j}
 = (\A \x^{in})_i -x^{out}_i ,
\end{aligned}
\end{equation}
where we used that the (0-1) adjacency matrix $\A$ of the graph is symmetric
with entries $a_{ij} =a_{ji} =\delta_{i\sim j}$.

Summarizing, if $\x$ is a right eigenvector of $\B^*$ with (real) eigenvalue
$\mu$, then
\begin{equation}\label{uj}
\mu \begin{pmatrix}
 \x^{out} \\
 \x^{in}
\end{pmatrix} =
 \begin{pmatrix}
 (\B^* \x)^{out} \\
 (\B^* \x)^{in}
\end{pmatrix} = 
 \begin{pmatrix}
 \bm O  & \DD -\I_n  \\
 \bm -\I_n  & \A
\end{pmatrix} 
\begin{pmatrix}
 \x^{out} \\
 \x^{in}
\end{pmatrix} 
\end{equation}
%because each incoming edge $j\to i$ contributes $d_i -1$ times to the
%%outgoing edges of $i$; likewise, each incoming edge $k\to j$
%with $k\ne i$ contributes to the incoming edge $j\to i$.
and
$$
 \begin{pmatrix}
 (\B^* \x)^{out} \\
 (\B^* \x)^{in}
\end{pmatrix} = 
\begin{pmatrix}
 (\DD -\I_n ) \x^{in} \\
 \A \x^{in} -\x^{out}
 \end{pmatrix} =
  \K \begin{pmatrix}
 \x^{out} \\
 \x^{in}
 \end{pmatrix}  .
 $$
In particular, if $\x$ is a right eigenvector of $\B^*$ with a real eigenvalue
$\mu \ne 0$,  then the
$2n$-dimensional vector comprised of parts $\x^{out}$ and $\x^{in}$ is a 
right eigenvector of
$\K$ with the same eigenvalue. Indeed,
$$
\mu \begin{pmatrix}
 \x^{out} \\
 \x^{in}
\end{pmatrix} =
\begin{pmatrix}
  (\mu \x)^{out} \\
  (\mu \x)^{in}
 \end{pmatrix} =
 \begin{pmatrix}
 (\B^* \x)^{out} \\
 (\B^* \x)^{in}
\end{pmatrix} =
\K \begin{pmatrix}
 \x^{out} \\
 \x^{in}
\end{pmatrix} .
$$
According to the previous remarks, the vector $\x$
is a left eigenvector, and $\breve \x$ is a right eigenvector of
$\B$ with the same eigenvalue.
To both of them the two segments, $\x^{out}$ and $\x^{in}$ of the right
eigenvector of $\K$ are responsible.
In view of the relation $ \x^{out} =\frac1{\mu} (\DD -\I_n ) \x^{in}$,
obtained by~\eqref{uj},
it suffices to consider only $\x^{in} \in \R^n$ for further clustering purposes,
see Section~\ref{id}.

 \end{itemize}

\subsection{Random graphs obtained by bond-percolation}\label{SBM}

In the simple graph  scenario, in~\cite{Newman22}, it is proved that
the so-called message passing (in other words, belief propagation) system 
of equations  (briefly, BP) has a non-trivial solution
if and only if, for the edge retention probability $\beta$,
$\beta > \frac1{\rho (\B )}$ holds, where $\rho (\B )$ is the
spectral radius of the
non-backtracking matrix $\B$ of a given sparse social network.
So $\frac1{\rho (\B )}$ 
is the \textit{bond-percolation threshold} for the giant component
to appear in this simple case;
which in view of~\eqref{ab} is greater than $\frac1{\rho (\A )}$ 
of~\cite{Bollobas} in the dense case. 

Now our purpose is to characterize the non-backtracking spectrum of a random 
graph  $G_n$ on $n$ nodes
coming from the $SBM_k$ model, and via perturbations, try to use
these properties to find out the number of clusters and the clusters themselves.
The parameters of the sparse $SBM_k$ model are as follows.

The $k\times k$
\textit{probability matrix} $\PP$ of the percolated sparse random graph 
$G_n \in SBM_k$  has entries
$$
 p_{ab} =\frac{c_{ab}}{n} ,
$$
where the $k \times k$
symmetric \textit{affinity matrix} $\C =(c_{ab})$ stays constant as 
$n\to\infty$. An edge between $i<j$ comes into existence,
independently of the others, with probability
$p_{ab}$ if $i\in V_a$ and $j\in V_b$, where $(V_1 ,\dots ,V_k)$ is a partition
of the node-set $V$ into $k$ disjoint clusters.
This will produce the upper-diagonal
part of the $n\times n$ random adjacency matrix $\A$, and $a_{ji} :=a_{ij}$.
It can be extended to the  $i=j$ case
when self-loops are allowed, or else, the diagonal entries of the
adjacency matrix are zeros.

Let $\bar \A$ denote the $n\times n$ inflated matrix of the $k\times k$ 
matrix $\PP$: ${\bar a}_{ij} =p_{ab}$ if $i\in V_a$ and $b\in V_b$.
When loops are allowed, then $\E (a_{ij} ) ={\bar a}_{ij} $ for all
$1 \le i,j \le n$.
In the loopless case, the expected adjacency matrix $\E \A$ differs from
$\bar \A$  with respect to the the main diagonal,
but the diagonal entries are negligible as will be shown in Section~\ref{id}. 

Specifically, sometimes $c_{ab} =c_{in}$ is the within-cluster $(a=b)$
and $c_{ab} =c_{out}$ is the between-cluster $(a\ne b)$ affinity.
In~\cite{Newman10}, the network is called \textit{assortative} if
$c_{in} > c_{out}$,
and \textit{disassortative} if $c_{in} < c_{out}$. Of course, remarkable
difference is needed between the two, to recognize the clusters. 
The cluster sizes are $n_1 ,\dots , n_k $ ($\sum_{i=1}^k n_i =n$), so the
$k\times k$ diagonal matrix  $\RR :=\diag (r_1 ,\dots ,r_k)$, where 
$r_a =\frac{n_a}{n}$ is the relative size of cluster
$a$ $(a=1,\dots ,k)$,  is also a model parameter ($\sum_{a=1}^k r_a =1$). 
The model $SBM_k$ is called \textit{symmetric} if
$r_1 =\dots =r_k =\frac1{k}$ and all diagonal entries of the affinity matrix
are equal to $c_{in}$, whereas the off-diagonal ones to $c_{out}$.

The average degree of a real world graph on $m$ edges and $n$ nodes is
$\frac{2m}{n}$. 
The expected average 
degree of the random graph $G_n$ generated from the $SBM_k$ model is
\begin{equation}\label{deg}
  c=  \frac1{n} \sum_{i=1}^n \sum_{j=1}^n {\bar a}_{ij} = 
  \frac1{n} \sum_{a=1}^k \sum_{b=1}^k n_a n_b p_{ab} =
\frac1{n^2} \sum_{a=1}^k \sum_{b=1}^k n_a n_b c_{ab} =
%\sum_{a=1}^k \sum_{b=1}^k r_a r_b c_{ab} =
 \sum_{a=1}^k r_a  c_a ,
\end{equation}
where $c_a =\sum_{b=1}^k r_b c_{ab}$ is the average degree of cluster $a$.
It is valid only if self-loops are allowed. Otherwise, $c_a$ and $c$
should be decreased with a term of order $\frac1{n}$, but it will not make
too much difference in the subsequent calculations.

In~\cite{Bordenave}, the case when  $c_a =c$ for all $a$ is considered.
In this case $\frac1{c} {\bar \A}$ is a stochastic matrix, and so, the
spectral radius of $\bar \A$ is $c$.

The symmetric case is a further special case of this,
when %$r_1 =\dots =r_k =\frac1{k}$ 
%and $c_{ab} =c_{in}$ for $a=b$ and $c_{out}$, otherwise.
%In the symmetric case,
$$
 c=\frac{c_{in}+(k-1) c_{out}}{k} . 
$$
In this case, the separation of the clusters
only depends on the  $c_{in}$, $c_{out}$ relation.
If $c_{in}$ is ``close'' to $c_{out}$, then the groups cannot be distinguished. 
The detectability threshold (Kesten--Stigum threshold) in the symmetric case is 
\begin{equation}\label{2}
 | c_{in} - c_{out} | >k\sqrt{c} ,
\end{equation}
see~\cite{Bordenave,Decelle,Moore}.

In the symmetric $k=2$ case, the authors of~\cite{Mossel} prove that
under condition~\eqref{2}, their algorithm gives a clustering that has a
correlation with the true one separated from zero; furthermore, this correlation
tends to 1 as the ratio of the left and right hand sides of~\eqref{2} 
increases. They also write that the $k\ge 2$ and 
$c_1 =\dots =c_k$ case is the hardest one,
as otherwise the clusters could be distinguished by sorting the node-degrees.

Note that Theorem 4 of~\cite{Bordenave} allows a small fluctuation for 
$n_a$'s, and so,
for $r_a$'s and $c_a$'s too. We make use of this when compare the
structural eigenvalues of the non-backtracking matrix to the eigenvalues of the
so-called transition matrix assessed during the BP process.
Observe that these assumptions are natural in social networks: the relative
sizes of communities are balanced (tend to a limit as $n\to\infty$); further,
the average degrees of nodes of the clusters do not differ much as
$n\to\infty$, which means that members of different communities have on average
closely the same number of bonds to members of their own or other communities.

Stephan and Massouli\'e~\cite{Stephan} investigate more general random graph
models, also in the edge-weighted case. In the special unweighted %$SBM_k$
situation we shall use the following (informal) statement of them
for the $SBM_k$ model, where the matrix $\bar \A$ is not only the
diagonal-corrected expected adjacency matrix, but it is approximately the
variance matrix of the random adjacency  matrix $\A$.
\begin{proposition}[based on Theorem 1 of~\cite{Stephan}]\label{propo}
 Assume that $k =\rk {\bar \A} =n^{o(1)}$, the graph  is sparse enough, and the
  eigenvectors corresponding to the non-zero eigenvalues of the matrix
  $\bar \A$  are sufficiently delocalized.
  Let $k_0$ denote the number of eigenvalues of $\bar \A$
  whose absolute value is larger than $\sqrt{\rho}$, where $\rho$ is
  the spectral radius of $\bar \A$: these are
  $\nu_1 \ge \dots \ge \nu_{k_0}$ with corresponding eigenvectors
  $\uuu_1 ,\dots ,\uuu_{k_0}$ (they form an orthonormal system as $\bar \A$
  is a real symmetric matrix).
  Then for $i\le k_0 \le k$, the $i$th largest eigenvalue $\mu_i$
  of $\B$ is asymptotically
  (as $n\to\infty$) equals to $\nu_i$ and all the other eigenvalues of $\B$
  are constrained to the circle (in the complex plane)
  of center $0$ and radius $\sqrt{\rho}$.
  Further, if $i\le k_0$ is such that $\nu_i$ is a sufficiently isolated
  eigenvalue of $\bar \A$,
  then the standardized eigenvector of $\B$  corresponding to
  $\mu_i$ has inner product close to 1
  with the standardized inflated version of $\uuu_i$, namely, with
  $\frac{\bm{End} \, \uuu_i }{\| \bm{End} \, \uuu_i \| }$.
\end{proposition}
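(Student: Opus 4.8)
The plan is to reduce the statement to a spectral analysis of the $2n\times 2n$ matrix $\K$ of~\eqref{K} and then to invoke the localization theorem of~\cite{Stephan}. First, recall that every eigenvalue $\mu$ of $\B$ with $|\mu|>1$ is an eigenvalue of $\K$, and that by~\eqref{rel} a right eigenvector $\x$ of $\B$ produces the $\K$-eigenvector $(\x^{out},\x^{in})$ with $\x^{out}=\frac1\mu(\DD-\I_n)\x^{in}$. Substituting this into the lower block of $\K$ gives $\A\x^{in}-\frac1\mu(\DD-\I_n)\x^{in}=\mu\x^{in}$, i.e. $\x^{in}$ lies in the kernel of the Bethe--Hessian $H(\mu):=\mu^2\I_n-\mu\A+(\DD-\I_n)$. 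Thus the structural eigenvalues of $\B$ are exactly the real $\mu>1$ at which $H(\mu)$ is singular, and the associated $\B$-eigenvector is recovered from the null vector $\x^{in}$ through the inflation maps $\bm{End}$ and $\bm{Start}$. The whole statement is therefore a claim about the low-lying spectrum of $H(\mu)$ and the alignment of its null vectors with the eigenvectors $\uuu_i$ of $\bar\A$.

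For the eigenvalue localization I would apply Theorem~1 of~\cite{Stephan} essentially verbatim: under the stated hypotheses ($k=\rk\bar\A=n^{o(1)}$, sufficient sparsity, delocalized eigenvectors of $\bar\A$) their result shows that the eigenvalues of $\B$ lying outside the disk of radius $\sqrt\rho$ are in bijection with, and asymptotically equal to, the eigenvalues $\nu_1\ge\dots\ge\nu_{k_0}$ of $\bar\A$ exceeding $\sqrt\rho$ in modulus, while all remaining eigenvalues of $\B$ are confined to that disk; this yields $\mu_i\to\nu_i$ for $i\le k_0$ together with the bulk confinement. The point that makes this delicate --- and the reason $\B$ rather than $\A$ is used --- is that in the sparse regime $\A$ does \emph{not} concentrate around $\bar\A$, so Weyl or Davis--Kahan bounds applied to $\A$ are unavailable; the required concentration is that of the (weighted) non-backtracking operator, proved in~\cite{Stephan} by a high-moment/trace expansion together with the Ihara--Bass identity and a resolvent analysis of the deformed operator. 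I would not reprove this, but only check that our model meets the hypotheses, noting in particular that when $c_a=c$ for all $a$ one has $\rho=c$, so $\sqrt\rho=\sqrt c$, recovering the threshold of the abstract.

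For the eigenvector alignment, an isolated $\nu_i$ supplies a spectral gap, so a Bauer--Fike/Davis--Kahan type bound in the non-backtracking form of~\cite{Stephan} controls the null vector: the near-kernel of $H(\mu_i)$ at $\mu_i\approx\nu_i$ aligns with $\uuu_i$, since on the relevant eigenspace $H(\mu_i)$ behaves like $\mu_i^2\I_n-\mu_i\bar\A+(\DD-\I_n)$ up to the fluctuation $\mu_i(\A-\bar\A)$, which the delocalization hypothesis makes negligible in that direction, giving $\x^{in}\propto\uuu_i$. It then remains to pass from $\x^{in}\propto\uuu_i$ back to $\x\propto\bm{End}\,\uuu_i$. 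Using $\x^{in}=\bm{End}^*\x$ from~\eqref{inout} and $\bm{End}^*\bm{End}=\DD$ from~\eqref{es}, the inflated vector satisfies $(\bm{End}\,\uuu_i)^{in}=\DD\uuu_i\approx c\,\uuu_i$ and $(\bm{End}\,\uuu_i)^{out}=\bm{Start}^*\bm{End}\,\uuu_i=\A\uuu_i$, so its ``in'' coordinates agree with $\x^{in}$ up to the degree fluctuation. The closeness in the ``out'' coordinates --- where $\x^{out}=\frac1{\mu_i}(\DD-\I_n)\x^{in}$ must be reconciled with $\A\uuu_i$ --- is the main obstacle: in the sparse regime $\A\uuu_i$ does not obey a law of large numbers, so this alignment cannot come from naive concentration and must be inherited from the non-backtracking eigenvector estimate of~\cite{Stephan}. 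My argument only repackages that estimate into the normalization $\bm{End}\,\uuu_i/\|\bm{End}\,\uuu_i\|$ requested in the statement, so that $\langle\x,\bm{End}\,\uuu_i\rangle/(\|\x\|\,\|\bm{End}\,\uuu_i\|)\to 1$.
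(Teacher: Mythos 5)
Your proposal is correct and, at its core, takes the same route as the paper: Proposition~\ref{propo} is stated there \emph{without proof}, as an informal repackaging of Theorem~1 of~\cite{Stephan}, and your argument likewise rests entirely on invoking that theorem for both the eigenvalue localization and the eigenvector alignment (as you yourself concede when you note that the alignment ``must be inherited from the non-backtracking eigenvector estimate of~\cite{Stephan}''). Your added Ihara--Bass/Bethe--Hessian scaffolding, reducing $\B$-eigenvectors to null vectors of $\mu^2\I_n-\mu\A+(\DD-\I_n)$, is mathematically sound and equivalent to the paper's own $\K$-matrix and inflation--deflation framework, but it is packaging around the same citation rather than an independent proof.
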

Note that if our random graph comes from the $SBM_k$ model with
$k\times k$ parameter matrices $\RR$ and $\C$, then $\bar \A$
has rank at most $k$, and its nonzero eigenvalues are identical to the
real eigenvalues of the matrix $\RR \C$ (same as those of
$\RR^{\frac12}  \C \RR^{\frac12}$),
and the eigenvectors of it are inflated versions
of those of $\RR^{\frac12}  \C \RR^{\frac12}$; consequently, they are
step-vectors on $k$ different  steps and so, 
the k-means algorithm is applicable to the $k_0$-dimensional representatives 
of the nodes constructed with the first $k_0$ shrunken,
normalized eigenvectors of $\B$
corresponding to its leading eigenvalues $\mu_1 ,\dots ,\mu_{k_0}$. 
The point is that we do not need
the model parameters and the spectral decomposition of $\bar \A$ itself
for the k-means clustering.
This issue together with inflation--deflation techniques will be discussed 
in details  in Section~\ref{id}.
Also note that in the $SBM_k$ model, when $c_a =c$ $(a=1,\dots ,k )$,
then $\rho=c$ and $k_0 =k$.

\section{Belief propagation (BP) in the general SBM}\label{bp}

To derive the percolation threshold analytically, we use the  BP
or cavity method of~\cite{Moore} in the generic case.
In~\cite{Moore}, it was developed for the symmetric case.
Akin to  the BP, called message passing in~\cite{Newman22}, 
let us introduce the following notation: for $a=1,\dots ,k$,
$$
 \psi_i^a \propto \PPP (i \textrm{ is in the cluster } a) ,
$$
given the observed graph on $n$ nodes, coming from the $SBM_k$ model, where 
$\psi_i^a$ for $a=1,\dots ,k$ defines the marginal membership (state) 
distribution of node $i$.

We assume that our neighbors are independent of each other, when conditioned
on our own state. Equivalently, we assume that our neighbors are correlated
only through us. This can be modeled by having each node $j$ send a message
$\psi_{j \to i }$ to $i$, which is an estimate of $j$'s marginal
(in other words, state or membership) if $i$ were not there
(more precisely, if we did not know whether or not there is an edge
between $i$ and $j$). Therefore, the conditional probability
$$
 \psi_{j \to i}^a := 
 \PPP (j \textrm{ is in cluster } a \textrm{ when } i \textrm{ is not present})
$$
(or when we do not know the membership of $i$) can be computed through the 
neighbors of $j$ that are different from $i$ (in the realization of the
random graph coming from the $SBM_k$ model) as follows:
\begin{equation}\label{psik}
\psi_{j \to i}^a = C_a^{ij} r_a \prod_{l\sim j , \, l\ne i} 
 \sum_{b=1}^k \psi_{l \to j}^b \, p_{ab}  , \quad a=1,\dots ,k ,
\end{equation}
where $C_a^{ij}$ is a normalizing factor.
Here $\psi_{l \to j}^b$ is the conditional probability that node $l$
(which is not $i$ and to which $j$ is connected) belongs to cluster $b$ even in
the absence of $j$, and the summation is an application of the rule of
completely disjoint events with probabilities $p_{ab}$ for $b=1,\dots ,k$.

This so-called BP or \textit{message-passing equation}
is a system of $2mk$ non-linear equations with the same number of unknowns.
It can be solved by initializing messages randomly, then repeatedly updating
them via~\eqref{psik}.
This procedure usually converges quickly 
and the resulting fixed point 
gives a good estimate of the marginals:
$$
 \psi_i^a \propto r_a \prod_{j\sim i } \sum_{b=1}^k 
 \psi_{j \to i}^b \, p_{ab} ,
$$
where the constant of proportionality is chosen according to
$\sum_{a=1}^k \psi_{i}^a =1$. 
However, the system of equations contains the model
parameters, so it can be solved only if the model parameters are known.

In \cite{Moore}, the symmetric case is treated,
when BP has a trivial fixed point 
$\psi_{j \to i}^a =\frac1{k}$, for $a=1,\dots ,k$. If it gets stuck there,
then BP does no better than chance. It happens when this
trivial fixed point of this discrete dynamical system is asymptotically stable.

In the generic case, we want to have an unstable fixed point via linearization:
$$
 \psi_{j \to i}^a := r_a + \ep_{j \to i}^a .
$$ 
If we substitute this in~\eqref{psik} and expand to first order in $\epv$
(vector of $2mk$ coordinates $\ep_{j \to i}^a$'s), the updated system of
equations is
\begin{equation}\label{hosszu}
\begin{aligned}
 \ep_{j \to i}^a &= \psi_{j \to i}^a  -  r_a = r_a \left\{ C_a^{ij}
 \prod_{l\sim j , \, l\ne i} \left[ \sum_{b=1}^k \psi_{l \to j}^b \, 
 p_{ab} \right] -1 \right\}  \\
 &= r_a \left\{ C_a^{ij} \prod_{l\sim j , \, l\ne i} \left[ \sum_{b=1}^k (r_b +
 \ep_{l \to j}^b ) \, p_{ab} \right] -1 \right\} \\
&= r_a \left\{ C_a^{ij} \prod_{l\sim j , \, l\ne i} \left[ \sum_{b=1}^k r_b p_{ab}   +\sum_{b=1}^k \ep_{l \to j}^b \, p_{ab} \right] -1 \right\} \\
&= r_a \left\{ C_a^{ij} \prod_{l\sim j , \, l\ne i} \left[ \frac{c_a}{n} 
  +\sum_{b=1}^k \ep_{l \to j}^b \, \frac{c_{ab}}{n} \right] -1 \right\} \\
 &= r_a \left\{ C_a^{ij} (\frac1{n} )^{s_j -1 } \left[ \sum_{b=1}^k 
 \sum_{l\sim j , \, l\ne i}
 \ep_{l \to j}^b \, c_{ab} \, c_a^{s_j -2} +c_a^{s_j -1} \right] -1 
  \right\} +O (\epv^2 ) ,
\end{aligned}
\end{equation}
where $s_j$ denotes the number of neighbors of $j$ and $s_j-1$ is the number of
its neighbors that are different from 
$i$ (this number is frequently 0 or 1, as we have a sparse graph).
If $s_j <2$ happens, then the corresponding entry of the non-backtracking
matrix is 0. % as it will be seen soon.
To specify the normalizing factor $C_a^{ij}$, we substitute zeros for $\ep$'s
that provide the trivial solution. This approximately yields
$$
 C_a^{ij} \left( \frac1{n} \right)^{s_j -1}  \, c_a^{s_j -1}  -1 =0  ,
$$
so
$$
 C_a^{ij} = \left( \frac{n}{c_a} \right)^{s_j -1} .
$$ 
Substituting this into the last equation of~\eqref{hosszu}, we get
$$  
\begin{aligned}
\ep_{j \to i}^a 
 &= r_a \left\{ \left( \frac{n}{c_a} \right)^{s_j -1 } 
 \left( \frac1{n} \right)^{s_j -1} c_a^{s_j -2 } 
 \left[ \sum_{b=1}^k \sum_{l\sim j, l\ne i} \ep_{l \to j}^b \, c_{ab} 
 \,   +c_a \right] -1  \right\} +O (\epv^2 )  \\
&= r_a \left\{ \frac{1}{c_a} 
 \left[ \sum_{b=1}^k \sum_{l\sim j, l\ne i} \ep_{l \to j}^b \, c_{ab} \,  
 +c_a \right] -1  \right\} +O (\epv^2 ) .
\end{aligned}
$$
The linear approximation of the above system of equations is
\begin{equation}\label{lin}
 \epv = \M  \epv ,
\end{equation}
where $\epv$ is a $2mk$-dimensional vector and 
the $(j \to i, \, a ) \, , \, ( l \to s ,\, b )$ entry of 
$\M$ is $\partial \ep_{j \to i}^a / \partial \ep_{l \to s}^b $.
It is not zero only for $j \to i $, $l \to s$ pairs with $s=j$
and $l\ne i$, i.e., when the corresponding entry of $\B$ is 1.
If it is not zero, then the partial derivative of $\ep_{j \to i}^a$ with
respect to $\ep_{l \to j}^b$ is $\frac{r_a}{c_a} c_{ab}$ (the second
order terms are disregarded).
Therefore, it can easily be seen that $\M$ is a Kronecker-product: 
$$
  \M =\B \otimes \T ,
$$ 
i.e., $\T$ is substituted for each entry 1 of the non-backtracking matrix $\B$,
where $\T = \G \RR \C$ is the \textit{transmission matrix} 
with $\G =\diag (\frac1{c_1} ,\dots , \frac1{c_k} )$.  
Note that 
$\T$ is a stochastic matrix (so its largest eigenvalue is 1) only if
$r_1 =\dots =r_k$.

For the solution of~\eqref{lin}, the fixed point of the linear
dynamical system
$$
 \epv^{(t+1)} = \M  \epv^{(t)} , \quad t=1,2,\dots
$$
is looked for. We can find a fixed point other than the trivial $\0$ if
$\0$ is an unstable solution, for which a sufficient condition is that
the spectral radius of the matrix $\M$ is greater than 1.
In this way, we have proved the following.

\begin{theorem}\label{bpgeneral} 
  Assume that the graph $G$ comes from the $SBM_k$ model with
  positive integer $k$ and $k\times k$ parameter
matrices $\RR =\diag (r_1 ,\dots ,r_k )$ (cluster proportions) and $\C$
(symmetric affinity matrix). The approximating linear dynamical system  
behind the BP system~\eqref{hosszu} is $\epv^{(t+1)} = \M  \epv^{(t)}$,
where $\epv$ is a $2mk$-dimensional vector and 
the $2mk\times 2mk$ matrix of the  system is $\M =\B \otimes \T$.
Here $\B$ is the non-backtracking matrix of the graph  $G$ and 
$\T = \G \RR \C$ is the transmission matrix
with $\G =\diag (\frac1{c_1} ,\dots , \frac1{c_k} )$, where
$c_a =\sum_{b=1}^k r_b c_{ab}$ is the average degree of cluster $a$,
for $a=1,\dots ,k$.
If the trivial fixed point $\0$ is an unstable fixed point of the above
system, then there is at least one eigenvalue of the matrix
$\B \otimes \T$ (product of eigenvalues of $\B$ and $\T$) 
that is greater than 1 (in absolute value).
\end{theorem}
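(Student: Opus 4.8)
\emph{Proof proposal.} The plan is to linearize the fixed-point map~\eqref{psik} about the uniform configuration $\psi_{i\leftarrow j}^a=r_a$ and to recognize its Jacobian as a Kronecker product. I would begin by writing $\psi_{i\leftarrow j}^a=r_a+\ep_{i\leftarrow j}^a$, inserting $p_{ab}=c_{ab}/n$ and the identity $c_a=\sum_b r_b c_{ab}$, so that each factor in the product over $l\sim j,\,l\ne i$ equals $\frac{c_a}{n}+\frac1n\sum_b c_{ab}\,\ep_{j\leftarrow l}^b$. Expanding the product of $s_j-1$ such factors to first order in $\epv$ isolates the leading term $(c_a/n)^{s_j-1}$ together with its linear correction, exactly as displayed in~\eqref{hosszu}. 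The normalizing constant $C_a^{ij}$ is then fixed by insisting that $\epv=\0$ be a genuine fixed point, which forces $C_a^{ij}(c_a/n)^{s_j-1}=1$, i.e. $C_a^{ij}=(n/c_a)^{s_j-1}$; substituting this back cancels every power of $n$ and collapses the update to $\ep_{i\leftarrow j}^a=\frac{r_a}{c_a}\sum_{l\sim j,\,l\ne i}\sum_b c_{ab}\,\ep_{j\leftarrow l}^b+O(\epv^2)$.

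The second step is purely combinatorial. The coefficient of $\ep_{j\leftarrow l}^b$ in the equation for $\ep_{i\leftarrow j}^a$ is the constant $\frac{r_a}{c_a}c_{ab}$, and it is present precisely when $l\sim j$ and $l\ne i$ --- which is exactly the support pattern $b_{i\leftarrow j,\,j\leftarrow l}=1$ of the non-backtracking matrix. Thus the $2mk\times2mk$ Jacobian has the block structure of $\B$ with every nonzero entry replaced by the fixed $k\times k$ block $\T=(\frac{r_a}{c_a}c_{ab})_{a,b}=\G\RR\C$, i.e. $\M=\B\otimes\T$; here I would fix once and for all the ordering convention (edge index outer, cluster index inner) so that the tensor factorization is literal rather than merely up to permutation. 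I would also record that although $\T$ is not literally row-stochastic, conjugation by $\RR$ gives $\RR^{-1}\T\RR=\G\C\RR$, which \emph{is} row-stochastic; hence $\T$ has Perron eigenvalue $1$ and all other eigenvalues of modulus at most $1$.

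For the stability conclusion I would invoke the spectral theory of Kronecker products: the spectrum of $\B\otimes\T$ is the set of products $\{\mu\tau:\mu\in\mathrm{spec}(\B),\,\tau\in\mathrm{spec}(\T)\}$, a fact that survives via simultaneous Schur triangularization even though $\B$ need not be diagonalizable. The map $\epv\mapsto\M\epv$ obtained from iterating BP therefore grows or decays according to $\rho(\M)=\rho(\B)\,\rho(\T)$, and by Gelfand's formula the fixed point $\epv=\0$ is asymptotically stable iff $\rho(\M)<1$ and unstable as soon as some product $\mu\tau$ exceeds $1$ in modulus; Perron--Frobenius applied to $\B$ and to $\G\C\RR$ guarantees that the dominant such product is real and positive, so the criterion reads literally ``greater than $1$''. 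The main obstacle is not the Taylor expansion, which is routine, but this last step: one must justify that linear (in)stability of the non-normal, possibly non-diagonalizable operator $\M$ is governed by its spectral radius rather than by any single eigenvector, and must confirm that the product-of-spectra description persists despite the loss of diagonalizability of $\B$. Matching the non-backtracking support pattern to obtain the exact Kronecker form, with no spurious contribution from backtracking pairs $l=i$, is the other point that requires care.
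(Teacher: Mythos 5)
Your proposal follows essentially the same route as the paper's own derivation: linearize~\eqref{psik} about $\psi_{i\leftarrow j}^a=r_a$, fix the normalization $C_a^{ij}=(n/c_a)^{s_j-1}$ by requiring $\epv=\0$ to be a fixed point, read off the Jacobian entry $\frac{r_a}{c_a}c_{ab}$ supported exactly on the non-backtracking pattern, and conclude $\M=\B\otimes\T$ with the product-of-spectra criterion for instability of the trivial solution. Your two added observations --- that $\T=\G\RR\C$ is not literally row-stochastic but only similar, via conjugation by $\RR$, to the row-stochastic $\G\C\RR$ (the paper asserts stochasticity of $\T$ outright), and that the Kronecker spectrum description survives the possible non-diagonalizability of $\B$ via Schur triangularization --- are careful refinements of, not departures from, the paper's argument.
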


Note that in~\cite{Moore} only the symmetric case is treated.
In~\cite{Bordenave} the special case $c_1 =\dots =c_k =c$ is considered
when the matrix $\T$ becomes  $\T = \frac1{c} \RR \C$; then 
the leading eigenvalues of $\B$ and $\RR \C$ are w.h.p. ``close'' to each other.
Also, the largest eigenvalue of $\RR \C$ is $c$, which is trivially the case if
$k=1$ and we have the Erd\H os--R\'enyi random graph.
More precisely, the authors of~\cite{Bordenave} allow ``small'' fluctuations
of the cluster membership proportions that causes the same order of
fluctuations in the average degrees of the clusters. For the 
membership proportion of cluster $a$, denoted by $r_a^{(n)}$, the relation
$\lim_{n\to\infty} r_a^{(n)} =r_a$ is assumed with $\sum_{a=1}^k r_a =1$.
Specifically, in~\cite{Bordenave},
the assumption
\begin{equation}\label{ra}
 \max_{a\in \{ 1,\dots ,k \} } | r_a^{(n)} -r_a | =  O (n^{-\gamma} ) 
\end{equation}
is made  with some $\gamma  \in (0,1]$,
where $n$ denotes the number of nodes.
This assumption implies that in the $c_1 =\dots =c_k =c$ case,
$\max_{a\in \{ 1,\dots ,k \} } | c_a^{(n)} -c | =  O (n^{-\gamma} )$.

Theorem 4 of~\cite{Bordenave} states that if 
\begin{equation}\label{maxc}
  \max_a c_a^{(n)} =c + O (n^{-\gamma} ), 
\end{equation}
with some $\gamma \in (0,1]$, and the
 relative proportions of the clusters converge, then  w.h.p.
$$
 \mu_i =\nu_i + o(1) \quad (i=1,\dots k_0 ) \quad \textrm{and} \quad
 \mu_{i} < \sqrt{c} +o(1) \quad (i>k_0 ) ,
$$
where $\mu_i$'s and $\nu_i$'s $(i=1,\dots ,k_0 )$ are the structural 
eigenvalues of $\B$ and $\RR \C$, respectively, whereas $k_0 \le k$ is the
positive  integer for
which $\nu_i^2 \ge \nu_1$ $(i=1,\dots k_0 )$ and $\nu_{k_0 +1}^2 < \nu_1$ holds.
Therefore, in the $SBM_1$ (Erd\H os--R\'enyi) model,  
$\mu_1 =c+o(1)$ and $\mu_2 \le \sqrt{c} +o(1)$.

However, in the generic case too, 
simulations show that sometimes the leading eigenvalues of $\B$ and
$\RR \C$ are closer to each other than those of $\B$ and $c \T$,
but always closer than those of $\A$ and any of them.
The following considerations can be made.
If~\eqref{maxc} holds, then 
\begin{equation}\label{minc}
  \min_a \frac{c}{c_a^{(n)} }  = \frac{c}{ \max_a 
  c_a^{(n)} } = 1 + O (n^{-\gamma} ) . 
\end{equation}
By the rules of the Kronecker-products, 
$$
  \B \otimes c\T  = (\I_{2m} \B ) \otimes (c\G \RR \C ) = 
(\I_{2m} \otimes c\G ) ( \B \otimes \RR \C ) .
$$
But 
$$
  \I_{2m} \otimes c\G =\diag (\frac{c}{c_1} , \dots .\frac{c}{c_k},
 \dots , \frac{c}{c_1} , \dots .\frac{c}{c_k} ) 
$$ 
repeated $2m$ times.
By minimax theorems, the eigenvalues of  
$(\I_{2m} \otimes c\G ) (\B \otimes \RR \C )$ are 
within factors $u$ and $v$ of those of $\B \otimes \RR \C$, where 
$$
 u= \min_a \frac{c}{c_a^{(n)}} \quad \textrm{and} \quad  
 v= \max_a \frac{c}{c_a^{(n)}} .
$$
Here $u$ and $v$ depend on $n$, but we do not denote this dependence.
Also, for fixed ``large'' $n$, the eigenvalues of  $c\G \RR \C $ are 
within a factor $u$ and $v$ of those of $\RR \C$.

Then the condition  $\lambda (\B) \, \lambda (\T )>1$, i.e.,  
$$
 \lambda (\B) \, \lambda (c\T ) =\lambda (\B) \, \lambda (c \G \RR \C )  >c 
$$
holds if
$$
  \lambda (\B) \, (u \lambda (\B) )   > c . 
$$
This implies that
$$
  \lambda (\B) \ge \frac{\sqrt{c}}{\sqrt{u}} \quad \textrm{and} \quad
  \lambda (c\G \RR \C) \ge u \lambda (\B ) \ge \sqrt{u} \sqrt{c}
$$
should hold, where $u\le 1$. 

Consequently, real eigenvalues of $\B$ that are larger than $\sqrt{c}$
% or those of $c \G \RR \C$ that are a bit smaller than $\sqrt{c}$
should be taken into account. %This number is $k_0$.
However, $\RR$ and $\C$ contain the unknown model
parameters, $\G$ depends on them, and there is only $\B$ at our disposal
from a sample. 
But the model itself is adapted to the data, i.e., the underlying graph.
In~\cite{Newman22}, the author makes the following crucial observation:
``The calculation, we have described is based on a fit of the stochastic
block model to a network that was itself generated from the same model.''

So we should consider the
real eigenvalues of $\B$ that are  greater than $\sqrt{c}$. 
This number will be denoted by $k_0$.
Also, the non-zero eigenvalues of the expected adjacency matrix $\bar \A$ 
of the  next section are the same as
those of $\RR \C$, so they are in the neighborhood of the leading eigenvalues 
of $\B$ within a factor between $u$ and $v$. On the contrary,
their closeness to $\A$
is up to an additive constant (less than $\sqrt{c}$) 
as will be shown in Section~\ref{id}. 

Note that when the average degrees $c_a$'s of the clusters differ significantly,
then the leading eigenvalues of $\B$ are not closely aligned with those of
$\RR \C$. However, this case is relatively simple, as then the clusters could
be formed by sorting the node degrees.

\section{Inflation--deflation}\label{id}

Here we use the inflation--deflation technique 
of~\cite{Bolla13}. %(p. 100).

With the notation of the $SBM_k$ model: $\C =(c_{ab})$ is the $k\times k$
symmetric affinity matrix (assume that its rank is $k$)
and $\PP =\frac1{n} \C$ is the 
corresponding probability matrix of the random graph $G_n \in SBM_k$.
If self-loops are allowed, then $\bar \A$ is the expected adjacency matrix of
$G_n$ and it contains $p_{ab}$'s in the $(a,b)$-th block. Therefore,
it is the inflated matrix of $\PP$ with blow-up sizes $n_1 ,\dots ,n_k$
$(\sum_{a=1}^k n_a =n )$. The blow-up ratios are $r_a =\frac{n_a}{n}$ for
$a=1,\dots ,k$. The expected average degree of $G_n$ is
$$
c= \frac1{n} \sum_{i=1}^n \sum_{i=1}^n {\bar a}_{ij} =
%\sum_{a=1}^k \sum_{b=1}^k r_a r_b c_{ab} =
\frac1{n} \sum_{a=1}^k \sum_{b=1}^k n_a n_b c_{ab}
 =\sum_{a=1}^k r_a  c_a ,
$$
as in~\eqref{deg}.
Note that $c$ depends only on the model parameters and it is of constant order.
If the diagonal  entries of $\A$ are zeros, then $\bar \A$
differs from the expected adjacency matrix by
$\diag (p_{11}, \dots ,p_{11} , \dots , p_{kk} ,\dots ,p_{kk} )$ with $n_a$
entries equal to each other in the $a$-th block. 
However, by the Weyl's perturbation theorem,
the $k$ leading eigenvalues of $\bar \A$ and $\E (\A)$ differ only with a
term of order $\frac1{n}$. 

\begin{proposition}\label{step}
The matrix ${\bar \A}$ has rank $k$ and its non-zero eigenvalues ($\nu$'s)
with unit norm eigenvectors ($\uuu$'s) satisfy
\begin{equation}\label{u}
 {\bar \A } \uuu = \nu \uuu ,
\end{equation}
where $\uuu$ is the inflated vector (step-vector) of 
${\tilde \uuu } = (u (1) ,\dots ,u(k ))^*$ with block-sizes $n_1 ,\dots ,n_k$.
With the notation $\RR =\frac1{n} \diag (n_1 ,\dots ,n_k ) =
\diag (r_1 ,\dots ,r_k )$, the deflated equation of~\eqref{u} is equivalent to
\begin{equation}\label{v}
 \RR^{\frac12} \C \RR^{\frac12} \vvv = \nu \vvv ,
\end{equation}
where $\vvv =\sqrt{n}\RR^{\frac12} {\tilde \uuu}$.
Further,
if $\uuu_1 ,\dots \uuu_k$ is the set of orthonormal eigenvectors, corresponding
to the eigenvalues $\nu_1 ,\dots ,\nu_k$ of
$\bar \A$, then
$\vvv_i =\sqrt{n}\RR^{\frac12} {\tilde \uuu}_i$ $(i=1,\dots ,k )$ is the set
of orthonormal eigenvectors of $\RR^{\frac12} \C \RR^{\frac12}$
with the same eigenvalues.
Also, $\RR^{\frac12} \vvv_i = \sqrt{n}\RR {\tilde \uuu}_i$ is a right 
eigenvector of
$\RR \C$ and $\RR^{-\frac12} \vvv_i = \sqrt{n} {\tilde \uuu}_i$ is a left
eigenvector of $\RR \C$ 
with eigenvalue $\nu_i$, for $i=1,\dots ,k$.
\end{proposition}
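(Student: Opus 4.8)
The plan is to write $\bar{\A}$ in a factored form governed by the cluster structure and then read every assertion off that factorization, using a single change of variables by $\RR^{\frac12}$ to pass between the symmetric reduced matrix $\RR^{\frac12}\C\RR^{\frac12}$ and the non-symmetric ones $\RR\C$, $\C\RR$. Concretely, I would introduce the $n\times k$ membership matrix $\bm{Z}$ with $(\bm{Z})_{ia}=1$ iff node $i$ belongs to cluster $a$, so that the inflation of a $k$-vector $\tilde{\uuu}$ is exactly $\bm{Z}\tilde{\uuu}$ and $\bm{Z}^{*}\bm{Z}=\diag(n_1,\dots,n_k)=n\RR$. Since $\bar{\A}$ carries $p_{ab}=\frac{c_{ab}}{n}$ throughout its $(a,b)$ block, it factors as $\bar{\A}=\bm{Z}\PP\bm{Z}^{*}=\frac1n\bm{Z}\C\bm{Z}^{*}$. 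As $\bm{Z}$ has full column rank $k$, this gives $\rk\bar{\A}=\rk\C=k$ at once (this is where the standing nonsingularity of $\C$ is used). For the step-vector claim, any eigenvector with $\nu\neq0$ obeys $\uuu=\nu^{-1}\bar{\A}\uuu=\nu^{-1}\bm{Z}(\PP\bm{Z}^{*}\uuu)$, hence lies in the column space of $\bm{Z}$; that is, $\uuu=\bm{Z}\tilde{\uuu}$ is the inflation of some $\tilde{\uuu}\in\R^{k}$, which is the content of~\eqref{u}.

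Next I would deflate. Substituting $\uuu=\bm{Z}\tilde{\uuu}$ into $\bar{\A}\uuu=\nu\uuu$ and using $\bm{Z}^{*}\bm{Z}=n\RR$ and $\PP=\frac1n\C$ yields $\bm{Z}\,\C\RR\,\tilde{\uuu}=\nu\,\bm{Z}\tilde{\uuu}$; cancelling the injective $\bm{Z}$ leaves the reduced equation $\C\RR\,\tilde{\uuu}=\nu\tilde{\uuu}$. Transposing and using $\C^{*}=\C$, $\RR^{*}=\RR$ shows $\tilde{\uuu}$ is a left eigenvector of $\RR\C$, which already delivers the last relation $\RR^{-\frac12}\vvv_i=\sqrt{n}\,\tilde{\uuu}_i$. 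Because $\RR$ is nonsingular, $\RR^{\frac12}\C\RR^{\frac12}=\RR^{\frac12}(\C\RR)\RR^{-\frac12}$ is similar to $\C\RR$, the similarity carrying $\tilde{\uuu}$ to $\RR^{\frac12}\tilde{\uuu}$; applying it to $\C\RR\,\tilde{\uuu}=\nu\tilde{\uuu}$ produces $\RR^{\frac12}\C\RR^{\frac12}(\RR^{\frac12}\tilde{\uuu})=\nu(\RR^{\frac12}\tilde{\uuu})$, which is precisely~\eqref{v} with $\vvv\propto\RR^{\frac12}\tilde{\uuu}$, and the bijectivity of $\RR^{\frac12}$ gives the claimed equivalence of~\eqref{u} and~\eqref{v}.

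Finally I would fix the normalization and gather the eigenvector relations. Orthonormality of the $\uuu_i$ forces $\delta_{ij}=\uuu_i^{*}\uuu_j=\tilde{\uuu}_i^{*}(\bm{Z}^{*}\bm{Z})\tilde{\uuu}_j=n\,\tilde{\uuu}_i^{*}\RR\tilde{\uuu}_j$, so $\vvv_i:=\sqrt{n}\,\RR^{\frac12}\tilde{\uuu}_i$ satisfy $\vvv_i^{*}\vvv_j=\delta_{ij}$; since they solve~\eqref{v} and $\RR^{\frac12}\C\RR^{\frac12}$ is real symmetric, they form an orthonormal eigenbasis of it. Multiplying~\eqref{v} on the left once more by $\RR^{\frac12}$ shows $\RR^{\frac12}\vvv_i=\sqrt{n}\,\RR\tilde{\uuu}_i$ is a right eigenvector of $\RR\C$, while $\RR^{-\frac12}\vvv_i=\sqrt{n}\,\tilde{\uuu}_i$ is the left eigenvector already identified, both belonging to the eigenvalue $\nu_i$.

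I expect no genuine obstacle here: the whole statement is a chain of similarity transforms by $\RR^{\frac12}$ read off the single factorization $\bar{\A}=\frac1n\bm{Z}\C\bm{Z}^{*}$. The only points demanding care are clerical—keeping the $\sqrt{n}$ and $n$ normalizers consistent throughout, and tracking that, since $\RR\C$ is not symmetric, the inflated vectors $\sqrt{n}\,\tilde{\uuu}_i$ serve as its \emph{left} eigenvectors whereas their $\RR^{\frac12}$-scaled forms serve as its \emph{right} eigenvectors.
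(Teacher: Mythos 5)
Your proposal is correct, and for the final eigenvector relations (right and left eigenvectors of $\RR\C$ via conjugation by $\RR^{\frac12}$) it performs exactly the same computations as the paper. Where you genuinely differ is in the front end: the paper asserts that $\bar\A$ has rank $k$ ``because it has $k$ different rows,'' declares that nonzero-eigenvalue eigenvectors are ``trivially'' inflated step-vectors, and states the deflation as an equivalence without computation; you instead introduce the $n\times k$ membership matrix $\bm{Z}$ and the factorization $\bar\A=\frac1n\bm{Z}\C\bm{Z}^{*}$ with $\bm{Z}^{*}\bm{Z}=n\RR$, and read everything off that. This buys real rigor: the rank claim becomes $\rk\bar\A=\rk\C$ (and you correctly flag that rank exactly $k$ requires $\C$ nonsingular --- a hypothesis the paper leaves implicit, and which its ``$k$ different rows'' argument cannot substitute for, since distinct rows do not imply full rank); the step-vector structure of eigenvectors becomes a one-line consequence of $\uuu=\nu^{-1}\bm{Z}(\PP\bm{Z}^{*}\uuu)$ lying in the column space of $\bm{Z}$; the deflation to $\C\RR\tilde\uuu=\nu\tilde\uuu$ follows by cancelling the injective $\bm{Z}$; and the transfer of orthonormality from the $\uuu_i$ to the $\vvv_i$ is the identity $\uuu_i^{*}\uuu_j=n\,\tilde\uuu_i^{*}\RR\tilde\uuu_j=\vvv_i^{*}\vvv_j$ rather than an assertion. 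The paper's version is shorter and works directly with inflated coordinates; yours makes the implicit hypotheses and the inflation--deflation bookkeeping explicit, at the cost of one extra piece of notation.
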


\begin{proof}
The matrix ${\bar \A}$ has $k$ different rows (columns), so it is of rank $k$ 
and its non-zero eigenvalues ($\nu$'s) have corresponding
unit norm eigenvectors ($\uuu$'s) in~\eqref{u}, which  are trivially
step-vectors, i.e., $\uuu$ is the inflated vector of, say,
${\tilde \uuu } = (u (1) ,\dots ,u(k ))^*$ with block-sizes $n_1 ,\dots ,n_k$.
The deflated equation of~\eqref{u} is equivalent to
$$
 \RR^{\frac12} \PP \RR^{\frac12} \vvv = \frac{\nu}{n} \vvv ,
$$
that in turn is equivalent to~\eqref{v},
where the eigenvector $\vvv =\sqrt{n}\RR^{\frac12} {\tilde \uuu}$ also has unit 
norm.

If $\uuu_1 ,\dots \uuu_k$ is the set of orthonormal eigenvectors of 
$\bar \A$, then  $\vvv_i =\sqrt{n}\RR^{\frac12} {\tilde \uuu}_i$ 
$(i=1,\dots ,k )$  
is the set of orthonormal eigenvectors of $\RR^{\frac12} \C \RR^{\frac12}$
with the same eigenvalues.
Also, 
$$
 \RR \C (\RR^{\frac12} \vvv_i ) = 
 \RR^{\frac12} ( \RR^{\frac12} \C \RR^{\frac12} ) \vvv_i  =
\RR^{\frac12} \nu_i\vvv_i =\nu_i\RR^{\frac12} \vvv_i ,
$$
so $\RR^{\frac12} \vvv_i = \sqrt{n}\RR {\tilde \uuu}_i$ is a right eigenvector 
of $\RR \C$ for $i=1,\dots ,k$. Further,  
$$
 (\RR^{-\frac12} \vvv_i)^* \RR \C = 
  \vvv_i^* (\RR^{\frac12} \C \RR^{\frac12}) \RR^{-\frac12} =
 \nu_i \vvv_i^* \RR^{-\frac12} = \nu_i (\RR^{-\frac12} \vvv_i)^* ,
$$
so $\RR^{-\frac12} \vvv_i = \sqrt{n} {\tilde \uuu}_i$ is a left
eigenvector of $\RR \C$ 
with the same eigenvalue $\nu_i$, for $i=1,\dots ,k$.

Observe that $(\RR \C )^* =\C \RR$, so the non-symmetric matrices $\RR \C$
and $\C \RR$ have the same set of eigenvalues, which are also real, as
they are identical to the eigenvalues of the symmetric matrix
$\RR^{\frac12} \C \RR^{\frac12}$. Further, right eigenvectors of $\RR \C$
are left eigenvectors of $\C \RR$, and vice versa.
%left eigenvectors of $\RR \C$ are right eigenvectors of $\C \RR$.
This finishes the proof.
\end{proof}

\begin{remark}\label{rem1} We remark the following.
  
  \begin{itemize}
  \item Let the real eigenvalues of $\bar \A$ be ordered in non-increasing
    order. Since $\bar \A$ is an irreducible matrix of positive entries,
    by the Frobenius theorem,
    $\nu_1$ is a single positive eigenvalue and $|\nu_i | <\nu_1$ for
    $i=2,\dots ,k$. Further, the eigenvector $\uuu_1$ or $\bar \A$,
    corresponding to $\nu_1$, has all positive coordinates.

   \item 
     In particular, if $c_1 =\dots =c_k =c$, then $\frac1{c} \C \RR$ is a
     stochastic matrix with largest eigenvalue 1 and corresponding
     right eigenvector $\1 \in \R^k$. Consequently, the largest eigenvalue
     of $\frac1{c} \RR \C$ is also 1 with
     left eigenvector $\1 \in \R^k$. Therefore, $\nu_1 =c$ and
     ${\tilde \uuu_1} =\frac1{\sqrt{n}} \1$ also has equal coordinates.
     So $\uuu_1$ is such too, and
     $\uuu_1 =\frac1{\sqrt{n}} \1 \in \R^n$ is the unit-norm eigenvector of
     $\bar \A$, corresponding to $\nu_1 =c$ in this special case.

\item
     If we further assume
the $c_{in}$ versus $c_{out}$ scenario, then in the symmetric case,
$\nu_2 =\dots =\nu_k  =c\lambda$ with $0<|\lambda | <1$ 
(see Section~\ref{betaperc}).
\item
Note that under the condition~\eqref{ra} of Section~\ref{bp},
the eigenvalues of $\RR \C$ and $\bar \A$ may differ within a factor
approaching 1.
\item
  In the unpercolated situation, the edge-probabilities are kept constant,
  $\PP =\C$ and the $n\times n$ $\bar \A$ is the blown-up
  matrix of the $k\times k$ probability matrix $\PP$. So, its
  eigenvalues are $n$ times the eigenvalues of $\RR^{-1/2} \C \RR^{-1/2}$.
  The Wigner-noise added has spectral norm $O (\sqrt{n})$(see~\cite{Bolla13}),
  therefore it cannot suppress the structure, unlike in the sparse
  $SBM_k$ model, discussed in the following.
\end{itemize}
\end{remark}

The (random) adjacency matrix $\A$ of (the random graph) $G_n$
coming from the sparse $SBM_k$ model is  $\A ={\bar \A } +\W$, where
$\W$ is an appropriate (random) error matrix and all the matrices are 
$n\times n$
symmetric (though we do not use an index $n$ for this fact).
We can achieve that the matrix $\A$ contains 1's in the $(a,b)$-th
block with probability $p_{ab}$, and 0's otherwise. Indeed, for indices
$1\le a<b \le k$ and $i\in V_a$, $j\in V_b$  let
\begin{equation}\label{wig1} 
w_{ji} =w_{ij} := \left\{ \begin{array}{ll}
   1-p_{ab} & \mbox{with probability } \quad p_{ab}  \\
   -p_{ab}   & \mbox{with probability } \quad 1-p_{ab} 
    \end{array}
 \right.
\end{equation}
be independent random variables; further, for
$a=1,\dots ,k$ and $i,j\in V_a$  $(i\le j)$  let
\begin{equation}\label{wig11}  
w_{ji} =w_{ij} := \left\{ \begin{array}{ll}
   1-p_{aa} & \mbox{with probability } \quad p_{aa}  \\
   -p_{aa}   & \mbox{with probability } \quad 1-p_{aa} 
    \end{array}
 \right. 
\end{equation}
be also independent, otherwise $\W$ is symmetric.
If self-loops are not allowed, then $w_{ii} =-p_{aa}$ with probability 1
could be defined for $i\in V_a$. It won't change significantly the
subsequent calculations.

This $\W$ is not a Wigner noise (see~\cite{Bolla13}) as it does not have a
nested structure.  However, if $i\in V_a$ and  $j\in V_b$, then 
%its entries are uniformly bounded with zero
%expectation and variance
%$$
%  p_{ab} (1-p_{ab}) = \frac{c_{ab} (n-c_{ab})}{n^2},\quad i\in V_a , j\in V_b .% $$
%  Actually, here ${\sqrt{n}} \W$ is a traditional Wigner noise, as
 $$
 \Var (\sqrt{n} w_{ij} )=  n p_{ab} (1-p_{ab})=n \frac{c_{ab} (n-c_{ab})}{n^2} =
   c_{ab} (1-\frac{c_{ab}}{n} ) \le c_{ab} ,
   $$
   where, without hurting the generality, we may assume $0<c_{ab} <1$.
   (Even if for some $a,b$, $c_{ab} \ge 1$, with large enough $n$,
   $\frac{c_{ab}}{n} <1$ holds.)
 Therefore,  $\sqrt{n}\W$ is a general Wigner-type matrix with
 standard deviation of its entries bounded by $\sigma = \max_{a,b}\sqrt{c_{ab}}$.
 With constant $\sigma$ for each entries of
 $\sqrt{n}\W$, in~\cite{Capitaine} it is proved that the spectrum of
 $\W$ obeys the semicircle law with radius $2\sigma$.
 More generally, the authors of~\cite{Khorunzhy} establish similar result for
 a Wigner-type matrix with independent (in and above the main diagonal),
 but usually not identically distributed entries; also, the authors
 of~\cite{Achlioptas} have
 analogous findings for the spectral norms of rectangular matrices.
 Latter results only require the
 fourth moments of the entries to exist. (Note that sometimes all the
 moments are required to exist, or the Poincar\'e inequality is required
 to be satisfied.)
 In~\cite{Sohsnikov}, universality for the leading eigenvalues of a
 Wigner-type matrix is established.
 
 Now, the finite rank matrix $\bar \A$ is
 considered as a perturbation on $\W$.
By the theory of deformed Wigner matrices~\cite{Capitaine}, the relation
 between $\sigma$ and the non-zero eigenvalues of $\bar \A$ decides the
 situation. Since the largest eigenvalue of the matrix $\bar \A$ of rank
 $k$ is between $\min_a c_a$ and $\max_a c_a$, it is typically
 smaller than $\sigma = \max_{a,b} \sqrt{c_{ab} }$,
 therefore the largest eigenvalue of $\bar \A +\W$
 is within the semicircle, and the structure in $\bar \A$ is suppressed by
 the noise $\W$.
 By \cite{Capitaine}, a non-zero eigenvalue $\nu_j$ of $\bar \A$ is
 perturbed as $\nu_j +\frac{\sigma^2}{\nu_j}$, if $\nu_j >\sigma$.
 The deviation from $\nu_j$ is of constant order, it does not decrease
 with $n$. So it is either swallowed by the noise or it is not
 changed with $n$.
Therefore, the relation of the non-zero eigenvalues of
 $\bar \A$ and  $\W$ depends of the individual $c_{ab}$s, but
 in general, it is impossible to distinguish between them.

Luckily, this is not the case with the eigenvalues of $\B$. 
%Summarizing, the eigenvalues of the inflated matrix $\bar \A$
%(same as those of $\RR \C$)  are of constant order, but 
%by the Weyl's perturbation theorem, a constant order
%of perturbation ($\W $) can take them farther from those of the 
%true adjacency matrix ($\A$) with distance $\max_a (2c_a +\sqrt{c_a})$
%than from the leading eigenvalues of $\B$.
%The same is true for the relation between the $k$ leading
%eigenvalues of the expected adjacency matrix $\E (\A )$ and $\A$.
As we saw, $\B$'s leading eigenvalues are within a factor 
$\max_a \frac{c}{c_a}$ from those of $\RR \C$.
In this way, the additive errors can be larger than the multiplicative ones.
%This is also supported by the theory of deformed Wigner
%matrices~\cite{Capitaine}.

As for the eigenvectors, in the $SBM_k$ model, the eigenvectors of the
expected adjacency matrix $\bar \A$
are $n$-dimensional step-vectors having $k$ different
coordinates. If $\uuu$ denotes such an eigenvector, then
the corresponding $2m$-dimensional 
inflated vector $\bm{End} \, \uuu$ has the same coordinates with
multiplicities equal to the node-degrees $(k\le n\le m)$. Those are also 
step-vectors on $k$ steps, and by Proposition~\ref{propo}, are close to
the corresponding eigenvector of $\B$ (with the close eigenvalue).
Therefore, the deflated vector is
$$
 {\bm{End}}^* {\bm{End}} \, \uuu =  ({\bm{End}} \, \uuu )^{in}
$$
and, by Equation~\eqref{es}, 
$\DD^{-1} \, ({\bm{End}} \,  \uuu )^{in}$ approximates $\uuu$;
i.e., the first $n$ coordinates of the corresponding $\K$-eigenvector,
divided with the corresponding node-degrees, will
 approximate $\uuu$. 

 Recall Proposition~\ref{propo}, according to which the
 leading positive real eigenvalues
of $\B$ (those of $\K$) are close to those of the expected adjacency matrix.
In case of the conditions of Proposition~\ref{propo},
the corresponding normalized
eigenvectors of $\B$ have an inner product with the
standardized $\bm{End}$-transforms of the analogous eigenvectors
of the expected adjacency matrix very close to 1. But the eigenvectors
of the latter one are step-vectors, and the eigenvectors of the former
have a large inner product with them, so they are within a ``small''  distance
from the subspace spanned by the two sets of the $k$ leading eigenvectors.
This implies that without knowing the model parameters, the $k$-variance
of the $k$-dimensional representatives of the directed edges is ``small''.
In this way, we obtain $k$
clusters of the bidirected edges, but via the majority of their
end-nodes we can classify the majority of the nodes too.

To decrease the computational complexity, we can as well use the
right eigenvectors  of $\K$ (only one half of them), more precisely,
the vectors $\DD^{-1} \, \x_i^{in}$ for $i=1,\dots ,k$.
For the representatives, based on them,
the k-means algorithm is applicable.

More precisely, by~\cite{Stephan}, let $\x$ be a unit-norm eigenvector of $\B$, 
corresponding to the eigenvalue $\mu$ that is close to the eigenvalue $\nu$
of the expected adjacency matrix, with corresponding eigenvector 
$\uuu \in \R^n$.
If our graph is from the $SBM_k$ model, then (without knowing its parameters)
we know that $\uuu$ is a step-vector with at most $k$ different coordinates.  
In~\cite{Stephan} it is proved that 
$$
 \left \langle \x , \frac{{\bm{End}} \, \uuu }{\| {\bm{End}} \, \uuu \| } 
\right \rangle \ge \sqrt{1-\ep} \ge 
 1-\frac12 \ep ,
$$
where $\ep$ can be arbitrarily ``small'' with increasing $n$.   Consequently,
$$
\left \|  \x - \frac{{\bm{End}} \, \uuu }{\| {\bm{End} } \, \uuu \| } 
\right \|^2 \le  2 - 2(1-\frac12 \ep ) = \ep   
$$
and, as $\x^{in} = \bm{End}^* \x$ holds by~\eqref{inout} and
${\bm{End}}^*{\bm{End}} =\DD $ holds by~\eqref{es},
$$ 
\left \| {\bm{End}}^* \x - {\bm{End}}^* \frac{{\bm{End}} \, \uuu }{\|
    {\bm{End} }  \, \uuu \|} \right \|^2  =
 \left \| \x^{in} - \DD \frac{\uuu }{\| {\bm{End} } \, \uuu \|} \right \|^2 
$$
also holds.
Consequently,
$$ 
 \left \| \DD^{-1} \x^{in} - \frac{\uuu }{\| {\bm{End} } \, 
\uuu \|} \right \|^2 \le \| \DD^{-1} {\bm{End}}^* \|^2 \ep \le \ep.
$$
Indeed, the largest eigenvalue of  
$(\DD^{-1} {\bm{End}}^* ) ({\bm{End}}\,\DD^{-1} ) = \DD^{-1}\DD \DD^{-1} =\DD^{-1}$
is $\max_i \frac1{d_i}$, so the largest singular value (spectral norm) of
$\DD^{-1} {\bm{End}}^*$ is estimated from above with 
$\left( \max_i \frac1{d_i} \right)^{\frac12}$.
Therefore,
$$
 \| \DD^{-1} {\bm{End}}^* \|^2 \le \max_i \frac1{d_i} = \frac1{\min_i d_i} \le 1.
$$

%Now we apply this to the $k$ leading normalized eigenvectors 
%$\x_1 ,\dots ,\x_k$
%of $\B$, for which
%$$
% \sum_{j=1}^k \| \DD^{-1} \x_j^{in} - \frac {\uuu_j }{\| 
%      {\bm End } \, \uuu_j \|} \|^2 \le k \ep \max_i d_i .
%$$
%As $\uuu_j$s are step-vectors with $k$ different coordinates on the same
%$k$ steps, the above sum of the squares is the objective function of the
%k-means  algorithm. Without knowing the $\uuu_j$s, we minimize it with
%the $k$-dimensional node representatives 

This results can be summarized in the following theorem.
\begin{theorem}\label{kvar}
Assume that the expected adjacency  matrix of the underlying random graph on
$n$ nodes and $m$ edges has rank $k$ with $k$ single eigenvalues  
and corresponding unit-norm eigenvectors 
$\uuu_1 ,\dots ,\uuu_k \in \R^n$. 
Assume that the non-backtracking matrix $\B$ of
the random graph has $k$ structural eigenvalues (aligned with those of the
expected adjacency matrix) with eigenvectors
$\x_1 ,\dots ,\x_k \in \R^{2m}$ such that
$$
 \left \langle \x_j , \frac{{\bm{End}} \, \uuu_j }{\| {\bm{End} } \, \uuu_j \|} 
\right \rangle \ge \sqrt{1-\ep} , \quad j=1,\dots ,k.
$$
 Then
for the transformed vectors $\DD^{-1} \x_j^{in} \in \R^n$, the relation
\begin{equation}\label{becs}
 \sum_{j=1}^k  \left\| \DD^{-1} \x_j^{in} - \frac {\uuu_j }{\| 
      {\bm{End} } \, \uuu_j \|} \right\|^2 \le k \ep  
\end{equation}
holds, where $\DD$ is the degree-matrix of the underlying graph.
\end{theorem}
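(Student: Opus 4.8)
The statement is precisely the per-index estimate derived in the discussion immediately preceding the theorem, now asserted for each of the $k$ eigenvector pairs and summed. The plan is therefore to establish, for each fixed $j$, the single-vector bound
\[
 \left\| \DD^{-1} \x_j^{in} - \frac{\uuu_j}{\|\bm{End}\,\uuu_j\|} \right\|^2 \le \ep ,
\]
and then add the $k$ resulting inequalities; because the right-hand side is the same for every $j$ and the summands do not interact, this immediately yields~\eqref{becs}. No orthogonality or Pythagorean argument between distinct indices is needed, so the whole difficulty is concentrated in the single-index estimate.

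For fixed $j$ I would proceed in three moves. First, abbreviate $\y_j := \bm{End}\,\uuu_j / \|\bm{End}\,\uuu_j\|$, which is a unit vector by construction, and recall that $\x_j$ is standardized, hence also of unit norm. The polarization identity $\|\x_j-\y_j\|^2 = 2 - 2\langle \x_j,\y_j\rangle$ then turns the inner-product hypothesis into the distance bound $\|\x_j - \y_j\|^2 \le \ep$ recorded before the theorem.

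Second, I would transport this bound from $\R^{2m}$ back to $\R^n$ through the linear map $\DD^{-1}\bm{End}^*$. By~\eqref{inout} we have $\x_j^{in} = \bm{End}^*\x_j$, and by~\eqref{es} we have $\bm{End}^*\bm{End} = \DD$; consequently $\DD^{-1}\bm{End}^*\y_j = \DD^{-1}\DD\,\uuu_j/\|\bm{End}\,\uuu_j\| = \uuu_j/\|\bm{End}\,\uuu_j\|$, so that
\[
 \DD^{-1}\x_j^{in} - \frac{\uuu_j}{\|\bm{End}\,\uuu_j\|} = \DD^{-1}\bm{End}^*\bigl(\x_j - \y_j\bigr) ,
\]
and hence the squared norm is at most $\|\DD^{-1}\bm{End}^*\|^2\,\|\x_j-\y_j\|^2 \le \|\DD^{-1}\bm{End}^*\|^2\,\ep$.

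Third, I would control the operator-norm factor, and this is the one genuinely non-automatic step, where~\eqref{es} is used a second time. Since $(\DD^{-1}\bm{End}^*)(\bm{End}\,\DD^{-1}) = \DD^{-1}(\bm{End}^*\bm{End})\DD^{-1} = \DD^{-1}$ is diagonal with largest entry $1/\min_i d_i$, the spectral norm of $\DD^{-1}\bm{End}^*$ equals $(\max_i 1/d_i)^{1/2}$, whence $\|\DD^{-1}\bm{End}^*\|^2 \le 1$ as soon as $\min_i d_i \ge 1$, i.e. the graph has no isolated node. This gives the per-index bound $\le \ep$, and summation over $j=1,\dots,k$ completes the argument. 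The main (and essentially only) obstacle is thus this operator-norm verification; everything else is the routine conversion of an inner-product lower bound into a distance upper bound for unit vectors, applied term by term.
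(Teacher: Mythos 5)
Your proposal is correct and takes essentially the same route as the paper's own proof: the per-index polarization identity $\|\x_j-\y_j\|^2=2-2\langle \x_j,\y_j\rangle$, the transport to $\R^n$ via $\DD^{-1}\bm{End}^*$ using $\x_j^{in}=\bm{End}^*\x_j$ and $\bm{End}^*\bm{End}=\DD$, the identical operator-norm computation $(\DD^{-1}\bm{End}^*)(\bm{End}\,\DD^{-1})=\DD^{-1}$ giving $\|\DD^{-1}\bm{End}^*\|^2\le \max_i \frac{1}{d_i}\le 1$, and summation over $j=1,\dots,k$. The only caveat, which you share with the paper, is that $\sqrt{1-\ep}\le 1-\tfrac12\ep$, so the stated hypothesis strictly yields $\|\x_j-\y_j\|^2\le 2-2\sqrt{1-\ep}\le 2\ep$ rather than $\ep$ (an immaterial constant since $\ep$ is arbitrarily small); your explicit remark that $\min_i d_i\ge 1$ (no isolated nodes) is needed makes precise a point the paper leaves implicit.
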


\begin{corollary}
If $\uuu_j$'s are step-vectors on $k$ steps (e.g., if our graph comes from the
$SBM_k$ model), then 
the $k$-variance of the node representatives 
$$
 ( \frac1{d_i} x_{1i}^{in} , \dots , \frac1{d_i} x_{ki}^{in} ),
 \quad i=1,\dots ,n 
$$
is estimated from above with $k \ep$ too. Indeed, the $k$-variance
is the minimum squared distance between the subspace spanned by the vectors 
$\DD^{-1} \x_j^{in}$ $(j=1,\dots ,k)$ and that of the $k$-step vectors, 
see~\cite{Bolla11} for the proof. Since in the $SBM_k$ model, $\uuu_j$'s are
also step-vectors on the same $k$ steps (as was proved in 
Proposition~\ref{step}),  this $k$-variance is less than the left-hand side
of inequality~\eqref{becs}, so it is at most  $k \ep$. 

Also, if the underlying graph is sparse and the conditions of Theorem 1 
of~\cite{Stephan} hold, then
$\ep >0$ can be arbitrarily small with increasing $n$. 
To minimize the objective function of the k-means algorithm, 
there are polynomial time approximating
schemes, e.g.,~\cite{Ostrovsky}, so without knowing the model parameters and the
vectors $\uuu_j$s, we can find the clusters with the help of the leading
$\B$-eigenvectors only.

It is important that here $k$ is the number of the structural eigenvalues of
$\B$, and the expected adjacency matrix must have a ``good'' $k$-rank
approximation.

This gives rise to a future research on sparse spectral clustering.
\end{corollary}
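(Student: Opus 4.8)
The plan is to reduce the claim to the already-established inequality~\eqref{becs} by way of the subspace characterization of the $k$-variance quoted from~\cite{Bolla11}. Write $X$ for the $n\times k$ matrix whose $j$-th column is $\DD^{-1}\x_j^{in}$, so that the node representatives $\rrr_i=(\frac1{d_i}x_{1i}^{in},\dots,\frac1{d_i}x_{ki}^{in})$ are exactly the rows of $X$. The $k$-variance $S_k^2$ is the $k$-means objective, i.e.\ the minimum over all partitions $\mathcal P=(P_1,\dots,P_k)$ of $\{1,\dots,n\}$ into $k$ parts of $\sum_{a=1}^k\sum_{i\in P_a}\|\rrr_i-\bar\rrr_a\|^2$, where $\bar\rrr_a$ is the centroid of part $a$. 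First I would invoke the characterization of~\cite{Bolla11}: for a fixed partition $\mathcal P$, the within-cluster sum of squares of the rows of $X$ equals $\sum_{j=1}^k\|\DD^{-1}\x_j^{in}-\Pi_{\mathcal P}(\DD^{-1}\x_j^{in})\|^2$, where $\Pi_{\mathcal P}$ is the orthogonal projection onto the subspace $\mathcal A_{\mathcal P}\subset\R^n$ of vectors constant on each block of $\mathcal P$ (the $k$-step vectors for $\mathcal P$). This is the coordinatewise Huygens decomposition: minimizing the centroid distances in $\R^k$ decouples across the $k$ coordinates, and for each coordinate the best constant on a block is its block-average, which is precisely the action of $\Pi_{\mathcal P}$. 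Hence $S_k^2=\min_{\mathcal P}\sum_{j=1}^k\dist^2(\DD^{-1}\x_j^{in},\mathcal A_{\mathcal P})$, matching the cited description of $S_k^2$ as the minimum squared distance between $\Span\{\DD^{-1}\x_j^{in}\}_{j=1}^k$ and the $k$-step subspaces.

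The decisive step is then to bound this minimum by plugging in the one partition that is actually relevant. Since the graph comes from the $SBM_k$ model, by Proposition~\ref{step} each eigenvector $\uuu_j$ is a step-vector on the underlying cluster partition $\mathcal P_0=(V_1,\dots,V_k)$; therefore so is each scaled vector $\uuu_j/\|\bm{End}\,\uuu_j\|$, which thus lies in $\mathcal A_{\mathcal P_0}$. Because $\Pi_{\mathcal P_0}$ is the orthogonal projection onto $\mathcal A_{\mathcal P_0}$, it furnishes the best step-vector approximation, so for every $j$
$$
\|\DD^{-1}\x_j^{in}-\Pi_{\mathcal P_0}(\DD^{-1}\x_j^{in})\|^2\le\Big\|\DD^{-1}\x_j^{in}-\frac{\uuu_j}{\|\bm{End}\,\uuu_j\|}\Big\|^2 .
$$
Restricting the minimum over $\mathcal P$ to the single choice $\mathcal P_0$ and summing over $j$ then yields $S_k^2\le\sum_{j=1}^k\|\DD^{-1}\x_j^{in}-\uuu_j/\|\bm{End}\,\uuu_j\|\|^2$, and the right-hand side is at most $k\ep$ by inequality~\eqref{becs} of Theorem~\ref{kvar}. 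This gives $S_k^2\le k\ep$, exactly as claimed.

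The main obstacle is the first, structural step: establishing (or correctly invoking) the identity that turns the genuinely combinatorial $k$-means objective over all $n$-point partitions into a subspace-distance problem about the $k$ fixed vectors $\DD^{-1}\x_j^{in}$. Everything after that is a one-line optimality-of-projection argument combined with~\eqref{becs}. One should only be mildly careful on two points: that the $k$-variance here is the \emph{unweighted} objective, matching the plain Euclidean norm in~\eqref{becs}; and that all the $\uuu_j$ are step-vectors on the \emph{same} partition $\mathcal P_0$, so that the single witness $\mathcal P_0$ simultaneously controls all $k$ coordinates and the sum over $j$ is legitimate.
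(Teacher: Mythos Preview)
Your proposal is correct and follows essentially the same route as the paper: invoke the subspace characterization of the $k$-variance from~\cite{Bolla11}, use Proposition~\ref{step} to see that the $\uuu_j$'s are step-vectors on the common true partition, and then bound the $k$-variance by the left-hand side of~\eqref{becs}. You have simply made explicit the Huygens decomposition and the projection-optimality step that the paper compresses into the citation of~\cite{Bolla11}; the logical skeleton is identical.
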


\begin{remark}
In case of an unweighted graph, the $n$-dimensional vector 
$\x_j^{in} $, obtained from a right eigenvector of $\B^*$, 
is the second segment of the right eigenvector of the matrix $\K$,
corresponding to the same eigenvalue. Consequently, by the swapping issues of
Section~\ref{pre}, the $n$-dimensional vector 
$\x_j^{in} $,  obtained from a right eigenvector of $\B$, 
is the first segment of the right eigenvector of $\K$,
for $j=1,\dots ,k$. 
So, we have to perform the spectral decomposition of a $2n\times 2n$
matrix only instead of a $2m\times 2m$ one, which fact has further computational
benefit (except for trees, $n\le m$, but usually $n$ is much smaller than $m$).

Also note that, by Remark~\ref{rem1}, in the $c_1 =\dots =c_k =c$
special case, $\uuu_1 =\frac1{\sqrt{n}} \1 \in \R^n$, and therefore, the
normalized $ {\bm{End} } \uuu_1 \in \R^k$ also has equal coordinates. 
Consequently, the vectors $\x_1$ and $\DD^{-1} \x_1^{in}$ also nearly have
the same coordinates, so the first coordinates of the $k$-dimensional
node representatives can be disregarded, and instead we use
$(k-1)$-dimensional representatives.

\end{remark}

\section{EM algorithm for estimating the parameters of the $SBM_k$ model}\label{em}

Given a simple graph
$G=(V, \A )$ ($|V|=n$)  with adjacency matrix $\A$ and an integer 
$k$ $(1<k<n)$, we are looking for the hidden $k$-partition $(V_1 ,\dots ,V_k )$
of the nodes such that
\begin{itemize}
\item nodes are independently assigned to cluster $V_a$  with
probability $r_a$, $a=1,\dots ,k$; $\sum_{a=1}^k r_a =1$;
\item given the cluster memberships, nodes of $V_a$ and $V_b$ are connected
independently, with probability 
$$
 \PPP (i\sim j \, | \, i\in V_a , j\in V_b ) = p_{ab}, \quad 1\le a,b\le k.
$$
\end{itemize}
The parameters are collected in the probability vector 
$\rrr = (r_1 ,\dots ,r_k)$
and the $k\times k$ symmetric probability matrix $\PP =(p_{ab})$.
We could as well consider the affinity matrix, but it does not make much
difference as $n$ is fixed now.

Here the $n\times n$ symmetric, 0-1 adjacency  matrix $\A =(a_{ij})$ 
of $G$ represents a statistical sample.
There are no loops, so the diagonal entries are
zeros, whereas the independent off-diagonal entries are considered as an i.i.d.
sample.  Based on $\A$, we want to estimate the parameters
$\rrr$ and $\PP$ of the above  $SBM_k$ model.
In fact, $\A$ is the incomplete data specification
as the cluster memberships are missing. Therefore, 
it is straightforward to use the
EM algorithm
%proposed by~\citet{DLR} %, also discussed e.g. in~\citet{Hastie,McLachlan}
for parameter estimation from incomplete data.
%This special application for mixtures is sometimes called 
%\textit{collaborative filtering}. % see~\citet{HP,Ungar}.

First we complete our data matrix
$\A$ with latent membership vectors $\Dada_1 ,\dots ,\Dada_n$ of the nodes
that are $k$-dimensional i.i.d. $Poly (1,\rrr )$ (multinomially distributed)
random vectors. More precisely,
$\Dada_i =(\Delta_{1i},\dots ,\Delta_{ki} )$, where $\Delta_{ai} =1$
if $i\in V_a$ and zero otherwise. 
Thus, the sum of the coordinates of any $\Dada_i$ is 1,
and $\PPP (\Delta_{ai} =1 ) =r_a$.

The likelihood is the square-root of
\begin{equation}\label{likmod}
 \prod_{a=1}^k \prod_{i=1}^n \prod_{b=1}^k
 [ p_{ab}^{\sum_{j:\, j\ne i} \Delta_{bj} a_{ij}}   \cdot
 (1-p_{ab})^{ \sum_{j:\, j\ne i} \Delta_{bj} (1-a_{ij})} ]^{\Delta_{ai}} ,
\end{equation}
which is maximized by the following iteration.

Starting with initial parameter values $\rrr^{(0)}$, $\PP^{(0)}$ and
membership vectors $\Dada_1^{(0)} ,\dots ,\Dada_n^{(0)}$, the $t$-th
step of the iteration is the following ($t=1,2,\dots $).
\begin{itemize}
\item \textbf{E}-step: 
we calculate the conditional expectation of each $\Delta_i$ conditioned
on the model parameters and on the other cluster assignments obtained in step
$t-1$ and collectively denoted by $M^{(t-1)}$.
By the Bayes rule, the responsibility of node $i$ for cluster $a$ is
$$
\begin{aligned}
 r_{ai}^{(t)} &=\E (\Delta_{ai} \, | \, M^{(t-1)}) \\
 &=\frac{\PPP (M^{(t-1)} |\Delta_{ai}=1 ) \cdot r_a^{(t-1)} }
{\sum_{b=1}^k \PPP (M^{(t-1)} |\Delta_{bi}=1 ) \cdot r_b^{(t-1)} }
\end{aligned}
$$($a=1,\dots ,k; \, i=1,\dots ,n$).  
For each $i$, $r_{ai}^{(t)}$ is proportional to the numerator, 
where
$$
\begin{aligned}
 &\PPP (M^{(t-1)} |\Delta_{ai}=1 )  \\
 = &\prod_{b=1}^k 
 (p_{ab}^{(t-1)})^{\sum_{j\ne i} \Delta_{bj}^{(t-1)} a_{ij}}  \cdot
 (1-p_{ab}^{(t-1)} )^{ \sum_{j\ne i}  \Delta_{bj}^{(t-1)} (1-a_{ij})}
\end{aligned}
$$
is the part of the likelihood~(\ref{likmod}) that affects node $i$ under
the condition $\Delta_{ai}=1$.
\item \textbf{M}-step:  we maximize the truncated binomial likelihood 
$$
 p_{ab}^{\sum_{i\ne j} r_{ai}^{(t)} r_{bj}^{(t)} a_{ij}}  \cdot
 (1-p_{ab})^{ \sum_{i\ne j} r_{ai}^{(t)} r_{bj}^{(t)} (1-a_{ij})}
$$
with respect to the parameter $p_{ab}$, for all $a,b$ pairs separately. 
Obviously, the maximum is attained
when the following estimators of $p_{ab}$'s comprise 
the symmetric matrix $\PP^{(t)}$:
$
p_{ab}^{(t)}=\frac{\sum_{i,j:\, i\ne j} r_{ai}^{(t)} r_{bj}^{(t)} a_{ij}}
{\sum_{i,j:\, i\ne j}r_{ai}^{(t)} r_{bj}^{(t)} }$
$(1\le a\le b\le k)$,
where edges connecting nodes of clusters $a$ and $b$ are counted 
fractionally, multiplied by the membership probabilities of their endpoints.
\end{itemize}

The maximum likelihood estimator of $\rrr$ in the $t$-th step is $\rrr^{(t)}$ of
coordinates  $r_a^{(t)} =\frac1{n} \sum_{i=1}^n 
 r_{ai}^{(t)}$ ($a=1,\dots ,k$), while that of the membership vector
$\Dada_i$ is obtained by discrete maximization: $\Delta_{ai}^{(t)} =1$
if $r_{ai}^{(t)} = \max_{b\in \{ 1,\dots ,k \} } r_{bi}^{(t)}$ and
0, otherwise. 
(In case of ties, nodes are classified arbitrarily.) 
This choice of $\rrr$ will increase (better to say, not decrease)
the likelihood function.
Note that it is not necessary to assign nodes uniquely to the clusters,
the responsibility $r_{ai}$ of a node $i$ can as well be regarded as
the intensity of node $i$ belonging to cluster $a$,
akin to the fuzzy clustering.

%The above algorithm is a special case of so-called Collaborative 
%Filtering~\cite{Hof}.
According to the general theory of the EM algorithm~\cite{DLR},
in exponential families (as in the present case), convergence
to a local maximum can be guaranteed 
(depending on the starting values), but it runs in polynomial time in 
the number of nodes $n$. However, the speed and limit of the convergence 
depend on the starting clustering, which can be chosen by means of
preliminary application of some nonparametric multiway cut algorithm.
% of Section~\ref{nonpara}.

\section{Special cases and $\beta$-percolation}\label{betaperc}

In the most special ``symmetric case'', the transition matrix is 
$\T=\G \RR \C =\frac 1{ck} \C$, where $\C$  contains $c_{in}$ in its 
main diagonal and $c_{out}$, otherwise;
see the BP method of Section~\ref{bp}.
%The belief propagation, called message
%passing or cavity method, is treated in in~\cite{Moore}.
$\T$ is a stochastic matrix, so its largest eigenvalue is 1 with corresponding
eigenvector $\1$ (the all 1's vector). The other eigenvalue is
\begin{equation}\label{lam}
 \lambda = \frac{c_{in} -c_{out} }{kc} 
\end{equation}
with multiplicity $k-1$. 
In the assortative case, $\lambda >0$; further, $\lambda <1$, as in the 
symmetric case
$$
 c =\frac{c_{in} + (k-1) c_{out}}{k}
$$
holds.
Consequently, the eigenvalues of $\RR \C =c \T$ are $c$  and $c\lambda$,
latter one has  multiplicity $k-1$.

With the BP method of~\cite{Moore}, that treats this special case,
the eigenvalues
of $\B \otimes \T$ greater than 1 are considered
%(giving a non-trivial solution)
that boils down to the condition $c\lambda^2 >1$, which gives the  
Kesten--Stigum threshold~\eqref{2}.

Now assume that $G_n \in SBM_k^{\beta}$. Then the $k\times k$ 
probability matrix is $(p_{ab}) =\beta \frac1{n}\C =\frac{\beta \C}{n}$.
Consequently, $\C$ and $c$ is multiplied by $\beta$, 
but $\T =\G \RR \C$ remains  unchanged.
So we consider  $\beta c\T$ as for the model side, 
but the underlying graph
and its $\B$ is the same as before. Therefore, the eigenvalues of
$\B \otimes \beta c \T = \beta c (\B \otimes \T )$ should be greater than 
$c$ if a non-trivial solution is required:
$$
 \beta \, \lambda (\B \otimes c \T ) > c .
$$
If the eigenvalues of $\B$ and $c\T$ are aligned, then this gives that
$
 \lambda (\B) > \frac{\sqrt{c}}{\sqrt{\beta}}
$
is needed for detectability; equivalently,
\begin{equation}\label{bet}
 \beta > \frac{c}{\lambda^2 (B)} =\left(\frac{\sqrt{c}}{\lambda (\B )}\right)^2.
\end{equation}

This is in accord with the fact, that in the $k=1$ case, in the
Erd\H os--R\'enyi model, when the largest  eigenvalue
of $\B$ is around $c$, then $\beta = %\frac1{\lambda (B)} =
\frac1{\mu_1}$ is  the percolation threshold, see~\cite{Newman22}.
In the multiclass scenario, $\frac{c}{\mu_i^2}$
are further phase transitions, leading to $i$ clusters, for $i=1,\dots ,k_0$
until $\mu_{k_0} \ge \sqrt{c}$, but $\mu_{k_0+1} <\sqrt{c}$.
Also note that inequality~\eqref{bet} has relevance only if $\lambda (\B) >
\sqrt{c}$, so eigenvalues of $\B$ greater than $\sqrt{c}$ give the phase 
transitions.

It is also in accord with the findings in the forthcoming $k=2$ case.
Let us consider the symmetric $SBM_2$ model: $r_1 =r_2 =\frac12$, $c_1 =c_2 = 
\frac{c_{in} + c_{out}}2 =c$. Then condition~\eqref{2} 
in the assortative case boils down to
\begin{equation}\label{22}
  c_{in} - c_{out}=|c_{in} -c_{out} |>2\sqrt{c} =\sqrt{2}\sqrt{c_{in} + c_{out}} .
\end{equation}
If $\C$ and $c$ are multiplied with $\beta$, we get
$
 \beta ( c_{in} - c_{out} )>\sqrt{2} \sqrt{\beta (c_{in} + c_{out}) } 
$.
This means that
$$
  c_{in} - c_{out} >\sqrt{2} \frac{\sqrt{c_{in} + c_{out}}}{\sqrt{\beta}} .
$$
Since $\beta <1$, the right hand side gives a higher lower threshold than 
$\beta =1$ in Equation~\eqref{22}.
This is also equivalent to
\begin{equation}\label{2beta}
  \beta >\frac{2(c_{in} + c_{out})}{(c_{in} - c_{out})^2} ,
\end{equation}
which makes sense if $\frac{2(c_{in} + c_{out})}{(c_{in} - c_{out})^2} <1$,
i.e., if
$$
  c_{in} - c_{out} >2\sqrt{c} =\sqrt{2} \sqrt{c_{in} + c_{out}} ,
$$
in accord with~\eqref{22}.
So, until equality is attained in~\eqref{2beta}, additional 
$\beta$-percolation can give detectable clusters too.

In the special case when $c_1 =\dots =c_k =c$, the above is also equivalent to
$$
c \lambda >\sqrt{c} ,
$$
where $c\lambda$ is the second largest eigenvalue (of multiplicity $k-1$)
of $c \T =\RR \C$, and it is closely related to the eigenvalues
$\mu_2 ,\dots ,\mu_k$ of $\B$. 

The above is also in accord with~\eqref{bet}, since the threshold for 
$\beta$ is
$$
 \beta> \frac{c}{\mu_2^2} = \frac{c}{ (\frac{c_{in}-c_{out}}2 )^2 } =
 \frac{ \frac{c_{in} +c_{out}}2}{ \frac{(c_{in}-c_{out})^2 }4  } =
 \frac{2(c_{in} + c_{out})}{(c_{in} - c_{out})^2} ,
$$
which is the same as inequality~\eqref{2beta}.

Observe that inequality~\eqref{2} implies
$$
 \beta | c_{in} - c_{out} | >k\sqrt{\beta c} ,
$$
which yields 
$$
 \beta > \frac{k^2 c}{( c_{in} - c_{out} )^2} = \frac{ k [c_{in}+(k-1) c_{out}]}
 {( c_{in} - c_{out} )^2} =\frac{c}{(c \lambda)^2} =
 \left( \frac{\sqrt{c}}{\lambda} \right)^2 
$$
in the symmetric case, where $\lambda$ is the second largest eigenvalue of
$\RR \C$ with multiplicity $k-1$ and it is aligned with $\mu_2 ,\dots ,\mu_k$.
So in the symmetric case, as the second largest eigenvalue of $\RR \C$
has multiplicity $k-1$, it gives rise to $k-1$ very close phase transitions.
In other cases, those can be farther apart.

Another implication is that
$$
 \sqrt{\beta} | c_{in} - c_{out} | >k\sqrt{ c} ,
$$
where the left-hand side decreases with $\beta <1$. It can be decreased until
equality is attained, when $\beta =\frac{c}{\lambda^2}$. 

In the general scenario, it means that at $\frac{c}{\mu_i^2}$, 
for $i=2,\dots ,k$,
there are phase transitions for $\beta$ producing $2,\dots ,k$ detectable
clusters: 
$$
 | c_{in} - c_{out} | >k\frac{\sqrt{c}}{\sqrt{\beta}} > k\sqrt{c} ,
$$
so there is a leverage of the percolation threshold with $1/\sqrt{\beta}$.

In the $c_{in}$ versus $c_{out}$ scenario,  $c_1 =\dots =c_k =c$ is equivalent to
 $r_1 =\dots =r_k$, and so,
$$
 \beta | c_{in} - c_{out} | >k\sqrt{\beta c} , \quad 
  | c_{in} - c_{out} | >k\frac{\sqrt{ c}}{\sqrt{\beta}} 
  $$
  which allows more detectable clusters if $\beta$ is increased,  see the
  subsequent illustrations.

Eventually, this phenomenon is illustrated by simulations.
First a random graph was generated on $n=900$ nodes, with parameter matrices
$\RR =\diag (\frac3{10},\frac13,\frac{11}{30})$ and 
$$
 \C =\begin{pmatrix}
 30  & 12 & 10 \\
 12  & 32 & 9 \\
 10 & 9 & 27
\end{pmatrix} .
$$

Then we percolated (retained) the edges with probability $\beta$. 
For small $\beta$'s the graph was not connected, which fact caused the
eigenvalue $1$ of the matrix $\K$ to have several geometric, and even more
algebraic multiplicities.
With increasing $\beta$, 
 Figures~\ref{fig:2},~\ref{fig:5},~\ref{fig:7}, and~\ref{fig:8} show the
 adjacency matrices and the $\K$-spectrum of the $\beta$-percolated
 random graph, when 1,2, and 3 structural (positive real) eigenvalues emerge
out of the bulk (within the encircled region). This happens about when $\beta$
exceeds $\beta_1$, $\beta_2$, and $\beta_3$, but the phase transitions are not
strict, as the alignment of the structural eigenvalues of the non-backtracking 
matrix $\B$ (those of $\K$)
and those of the model matrix $\RR \C$ is just asymptotic (for ``large'' $n$)
and guaranteed only when the expected degrees of the clusters are 
approximately the same.

\begin{figure}[htbp]
\centering  
\subfigure[Adjacency matrix (black: 1, white: 0).]{  
\begin{minipage}{5.6cm}
\centering  
\includegraphics[scale=0.4]{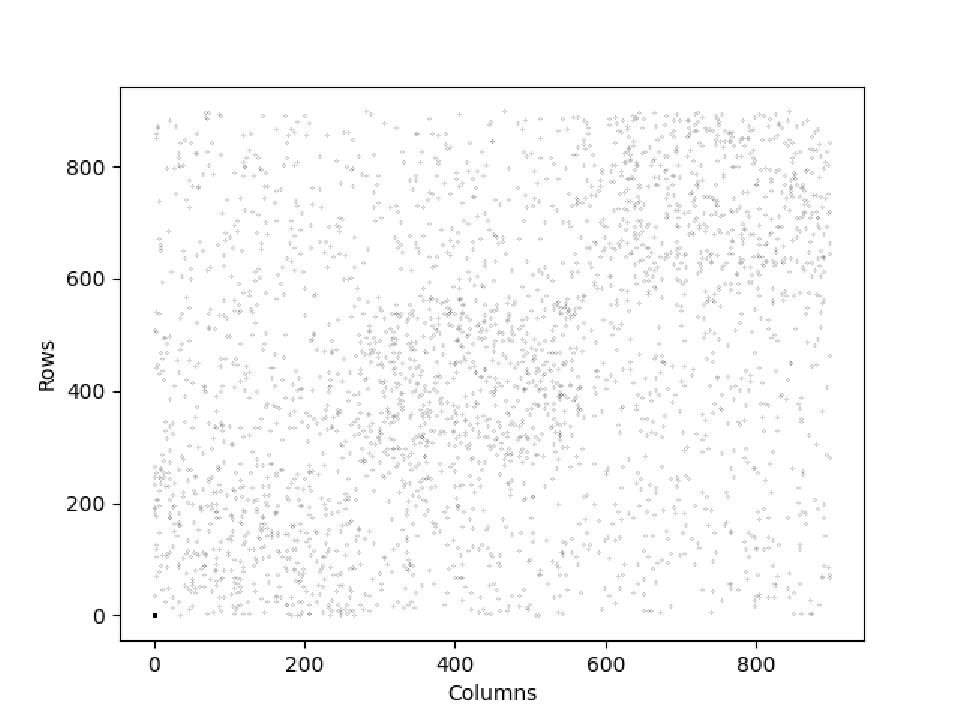} 
\end{minipage}
}
\subfigure[Spectrum of matrix $\K$.]{
\begin{minipage}{5.6cm}
\centering
\includegraphics[scale=0.4]{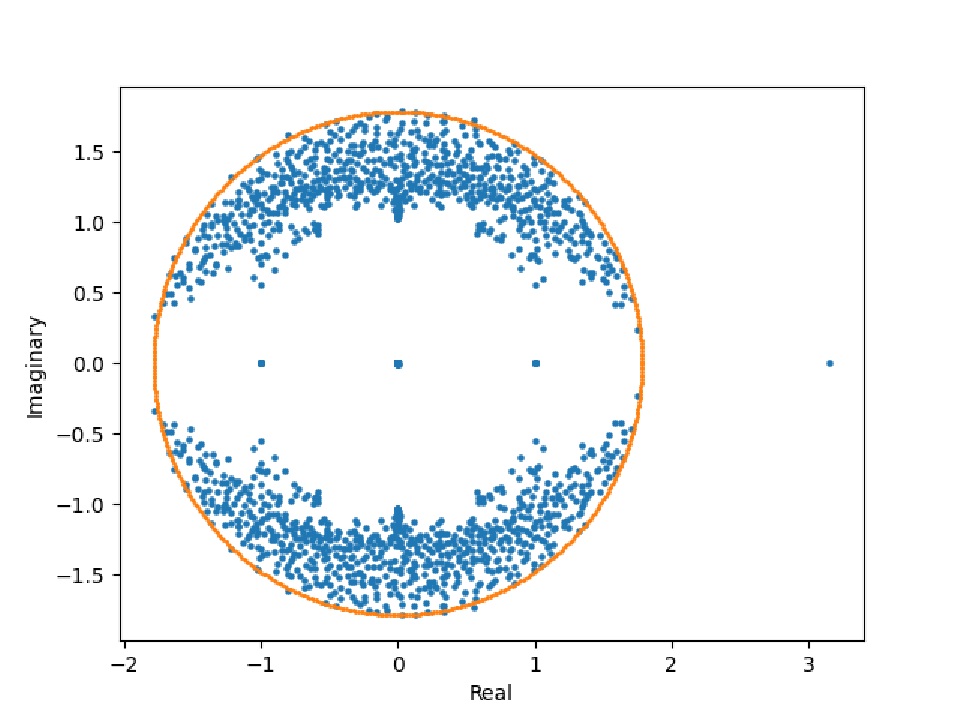}
\end{minipage}
}
\caption{$\beta_1<\beta=0.191<\beta_2$} 
\label{fig:2}   
\end{figure}

\begin{figure}[htbp]
\centering  
\subfigure[Adjacency matrix (black: 1, white: 0).]{   
\begin{minipage}{5.6cm}
\centering    
\includegraphics[scale=0.4]{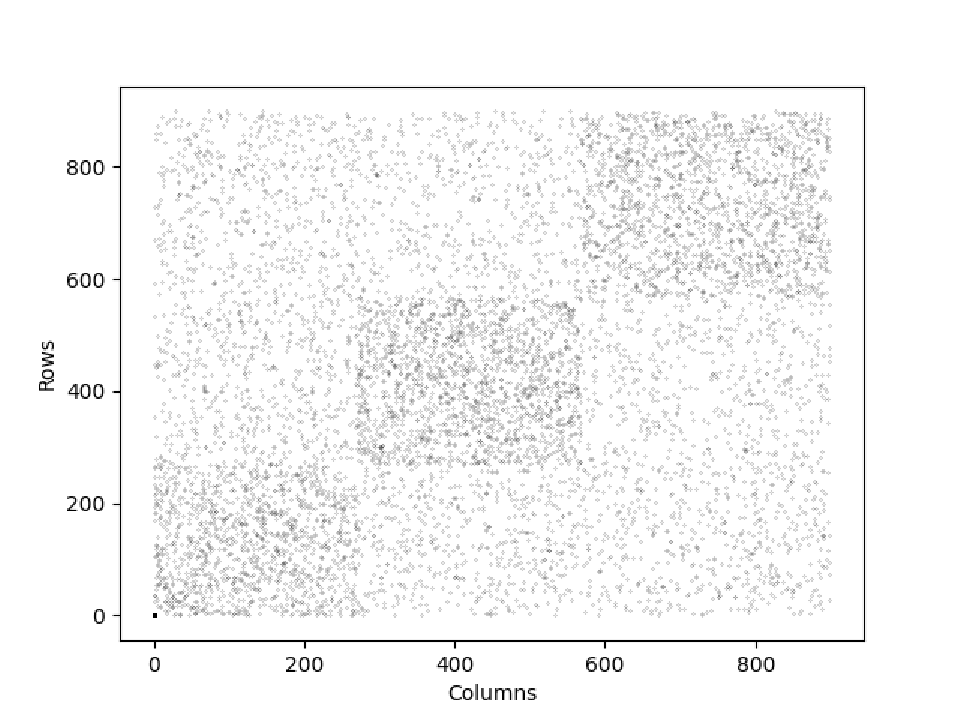} 
\end{minipage}
}
\subfigure[Spectrum of matrix $\K$.]{
\begin{minipage}{5.6cm}
\centering   
\includegraphics[scale=0.4]{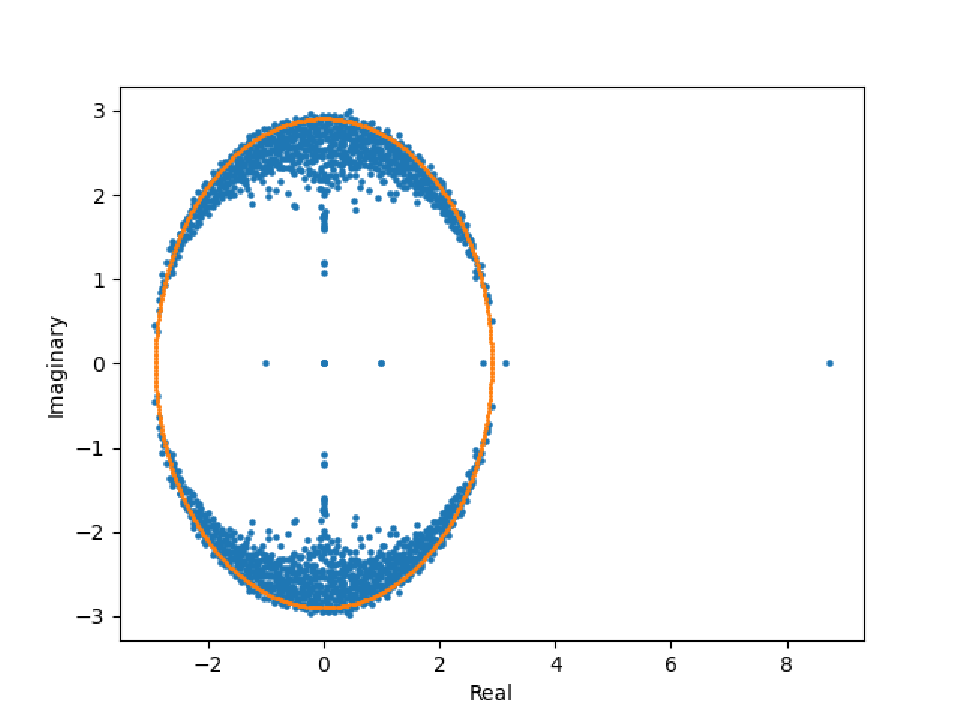}
\end{minipage}
}
\caption{$\beta =\beta_3=0.505$}
\label{fig:5} 
\end{figure}

\begin{figure}[htbp]
\centering  
\subfigure[Adjacency matrix (black: 1, white: 0).]{   
\begin{minipage}{5.6cm}
\centering   
\includegraphics[scale=0.4]{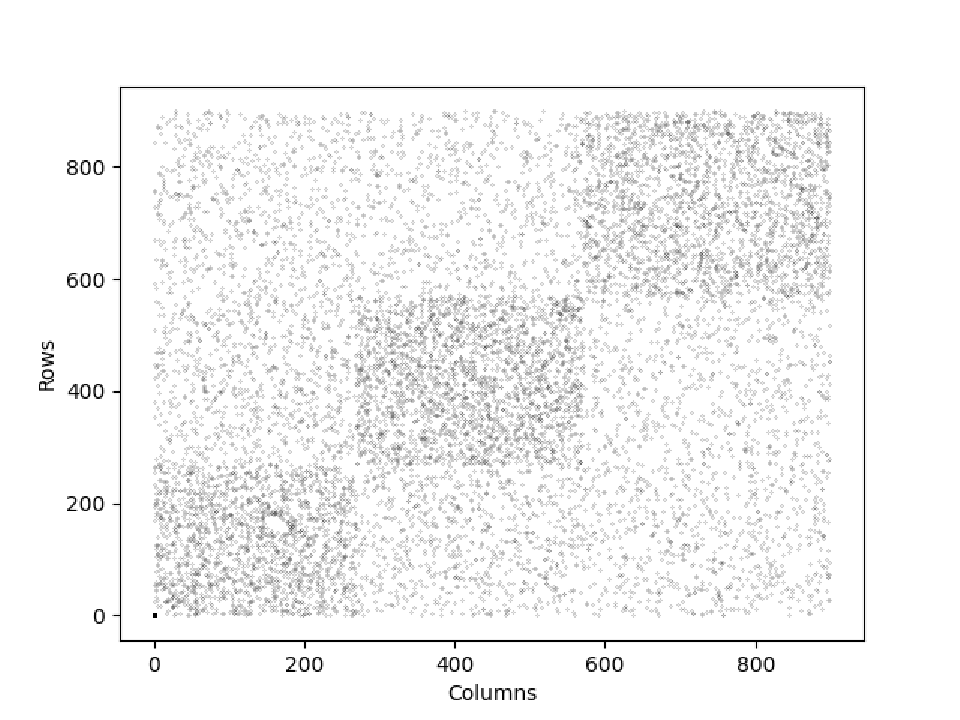} 
\end{minipage}
}
\subfigure[Spectrum of matrix $\K$.]{
\begin{minipage}{5.6cm}

\centering    
\includegraphics[scale=0.4]{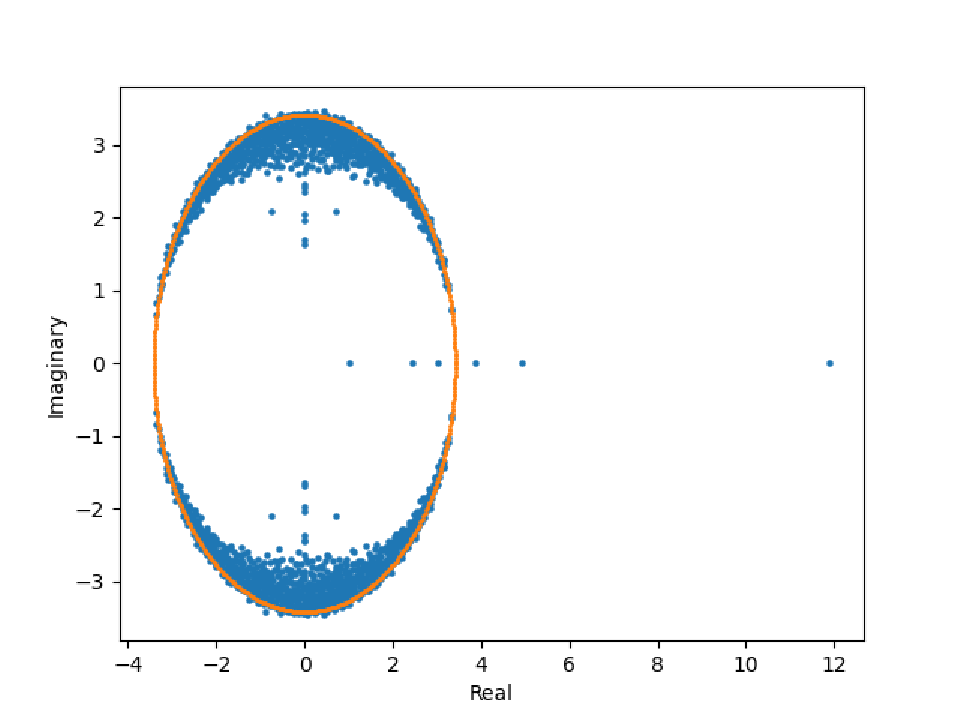}
\end{minipage}
}
\caption{$\beta_3<\beta=0.7$}
\label{fig:7}
\end{figure}

\begin{figure}[htbp]
\centering  
\subfigure[Adjacency matrix (black: 1, white: 0).]{   
\begin{minipage}{5.6cm}
\centering   
\includegraphics[scale=0.4]{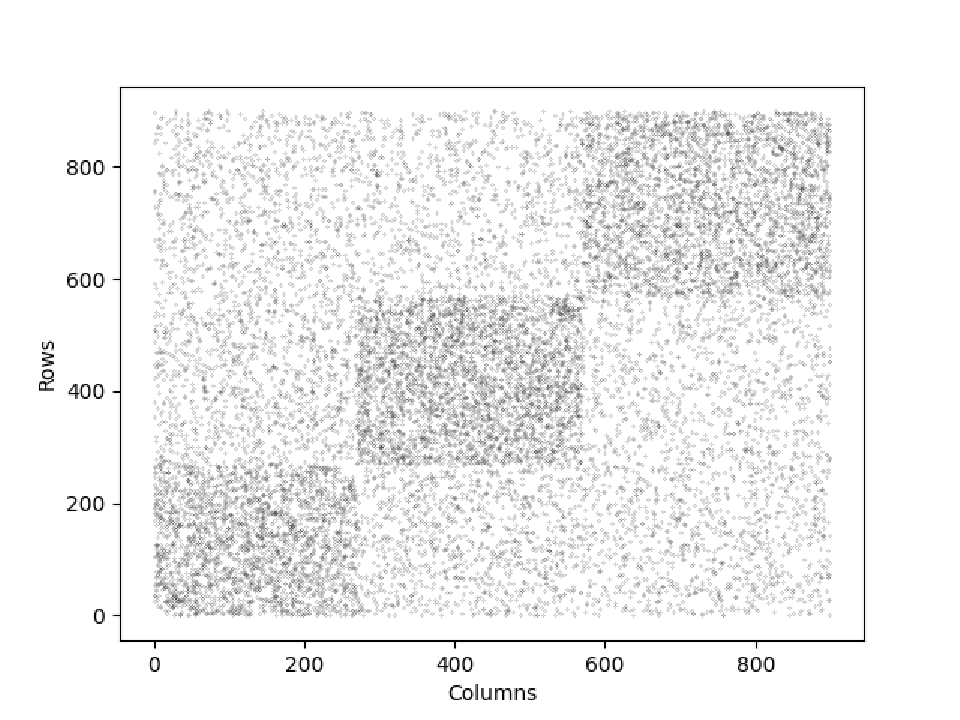} 
\end{minipage}
}
\subfigure[Spectrum of matrix $\K$.]{
\begin{minipage}{5.6cm}

\centering    
\includegraphics[scale=0.4]{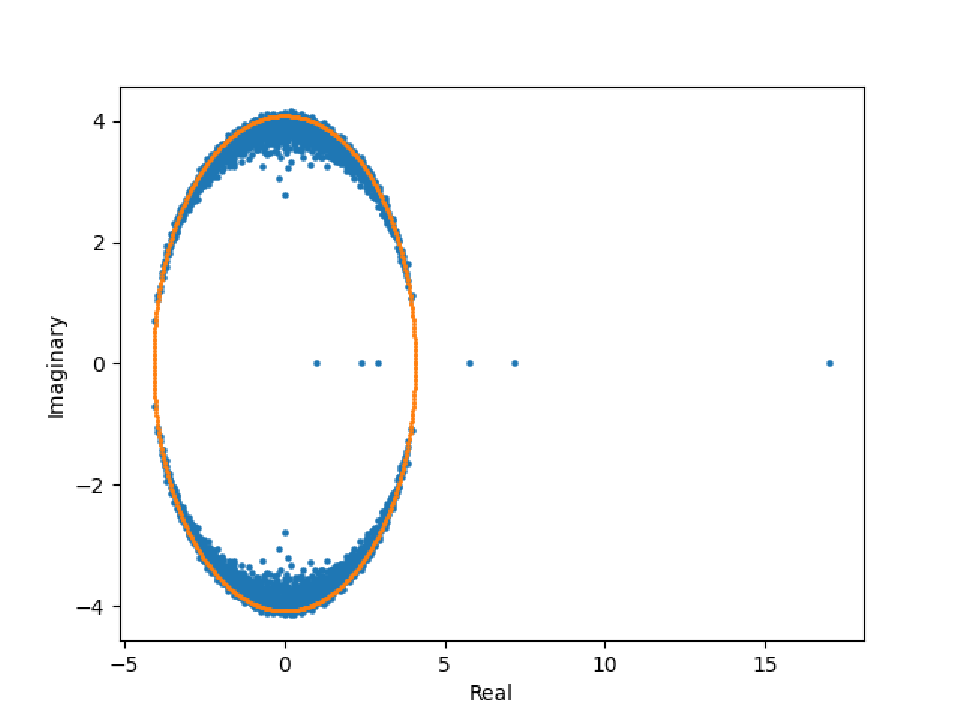}
\end{minipage}
}
\caption{$\beta=1$}   
\label{fig:8}    
\end{figure}

Next a random graph was generated on $n=900$ nodes, with parameter matrices
$\RR =\diag (\frac{35}{107},\frac{42}{107},\frac{30}{107})$ and 
$$
 \C =\begin{pmatrix}
 30  & 11.28 & 7.728 \\
 11.28  & 25 & 10.36 \\
 7.728 & 10.36 & 35
\end{pmatrix} 
$$
constructed so that the average degrees of the clusters be the same, i.e.,
 $c_a =\sum_{b=1}^3 r_b c_{ab}$ is the same for $a=1,2,3$. 

 Figures~\ref{fig:s2},~\ref{fig:s3},~\ref{fig:s4}, and~\ref{fig:s7} show the
 adjacency matrices and the $\K$-spectrum of the $\beta$-percolated
 random graph, when 1,2, and 3 structural (positive real) eigenvalues appear.
Here the phase transitions are closer to $\beta_i$'s %$(i=1,2,3)$ 
than in the previous generic case.

\begin{figure}[htbp]
\centering  
\subfigure[Adjacency matrix (black: 1, white: 0).]{   
\begin{minipage}{5.6cm}
\centering  
\includegraphics[scale=0.4]{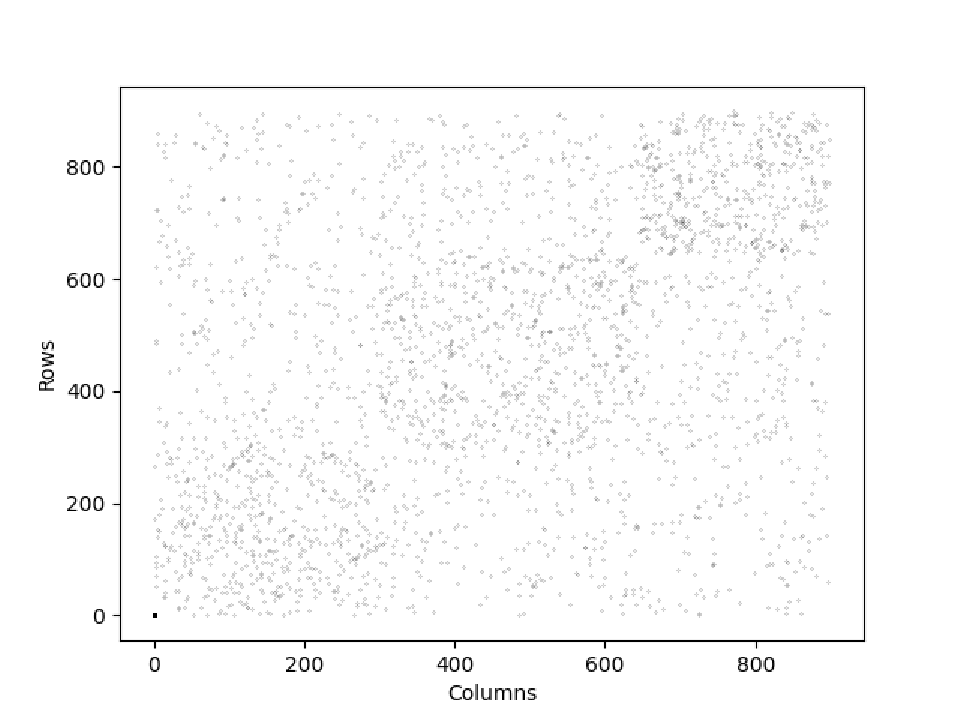} 
\end{minipage}
}
\subfigure[Spectrum of matrix $\K$.]{
\begin{minipage}{5.6cm}
\centering
\includegraphics[scale=0.4]{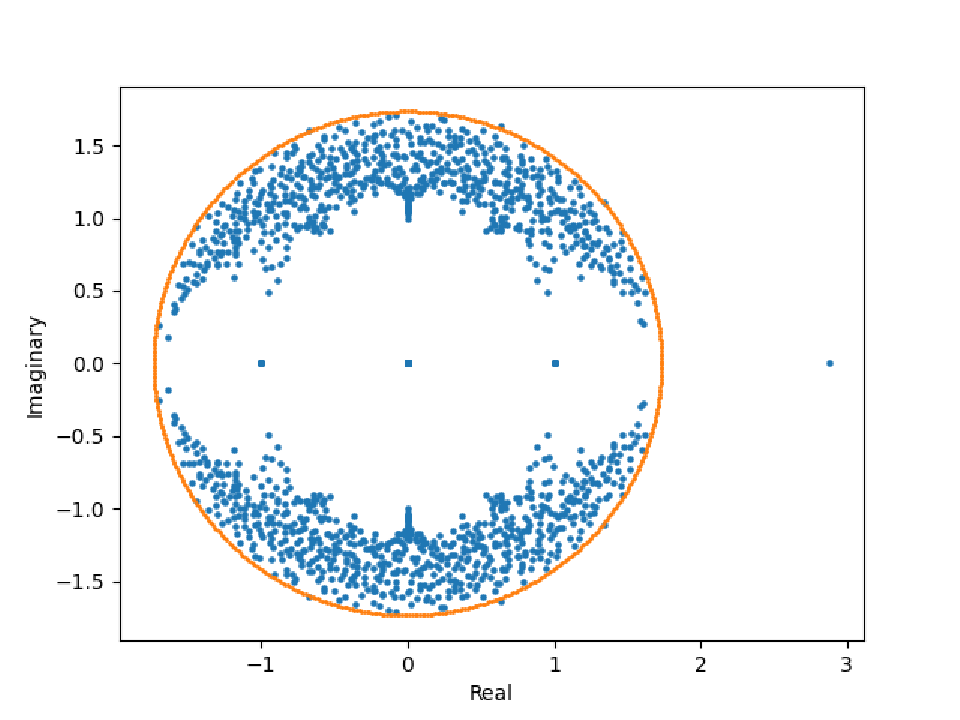}
\end{minipage}
}
\caption{$\beta_1<\beta=0.183<\beta_2$}   
\label{fig:s2}   
\end{figure}

\begin{figure}[htbp]
\centering  
\subfigure[Adjacency matrix (black: 1, white: 0).]{    
\begin{minipage}{5.6cm}
\centering   
\includegraphics[scale=0.4]{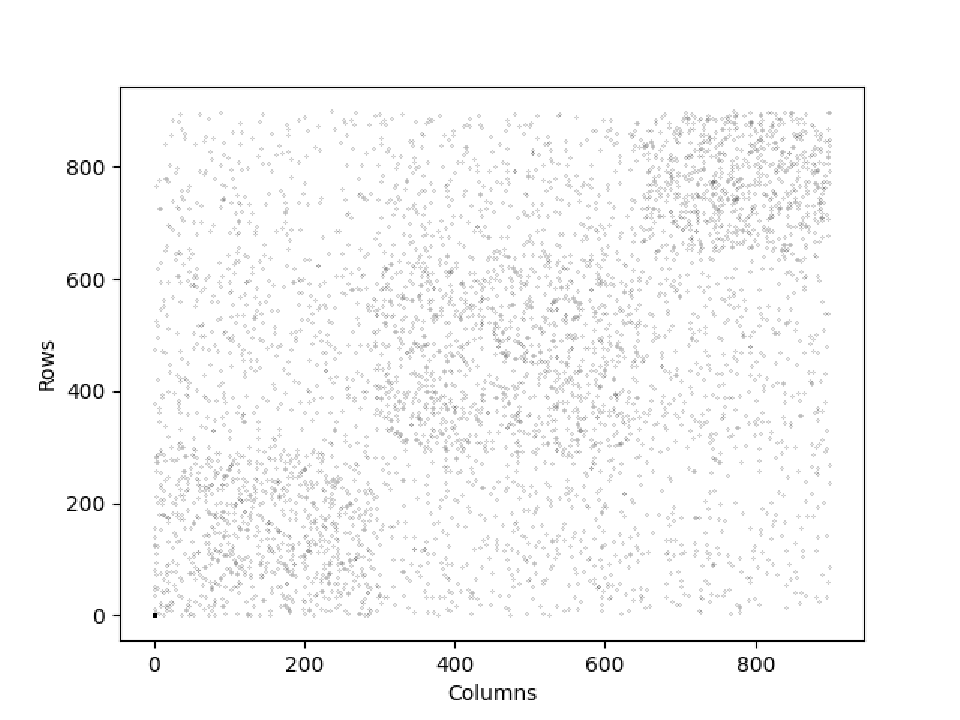}  
\end{minipage}
}
\subfigure[Spectrum of matrix $\K$.]{
\begin{minipage}{5.6cm}
\centering   
\includegraphics[scale=0.4]{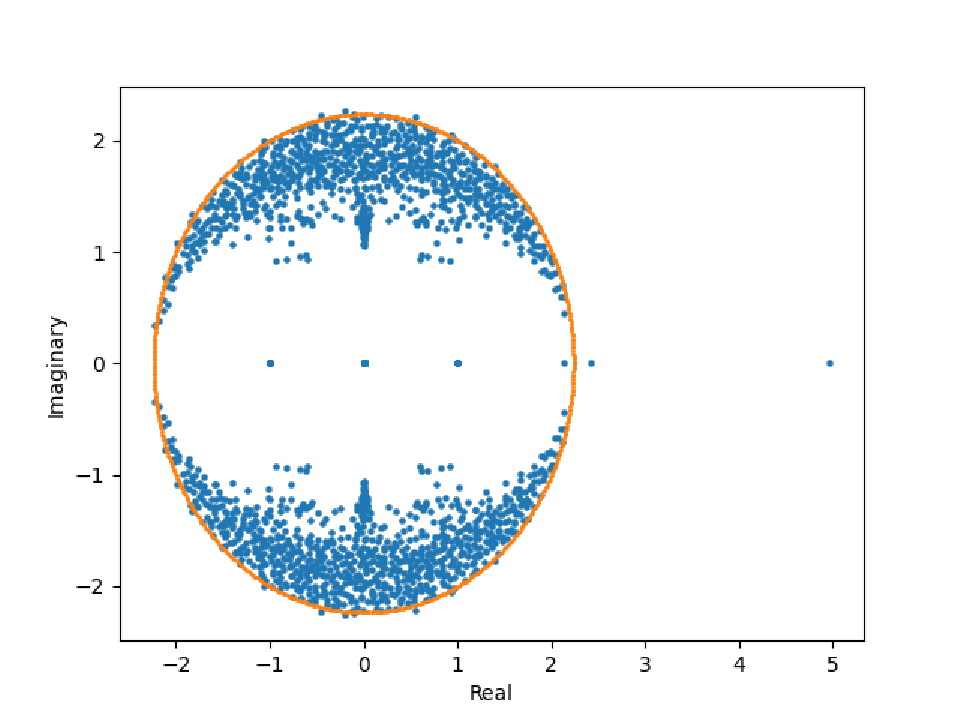}
\end{minipage}
}
\caption{$\beta =\beta_2=0.305$}   
\label{fig:s3}  
\end{figure}

\begin{figure}[htbp]
\centering 
\subfigure[Adjacency matrix (black: 1, white: 0).]{    
\begin{minipage}{5.6cm}
\centering  
\includegraphics[scale=0.4]{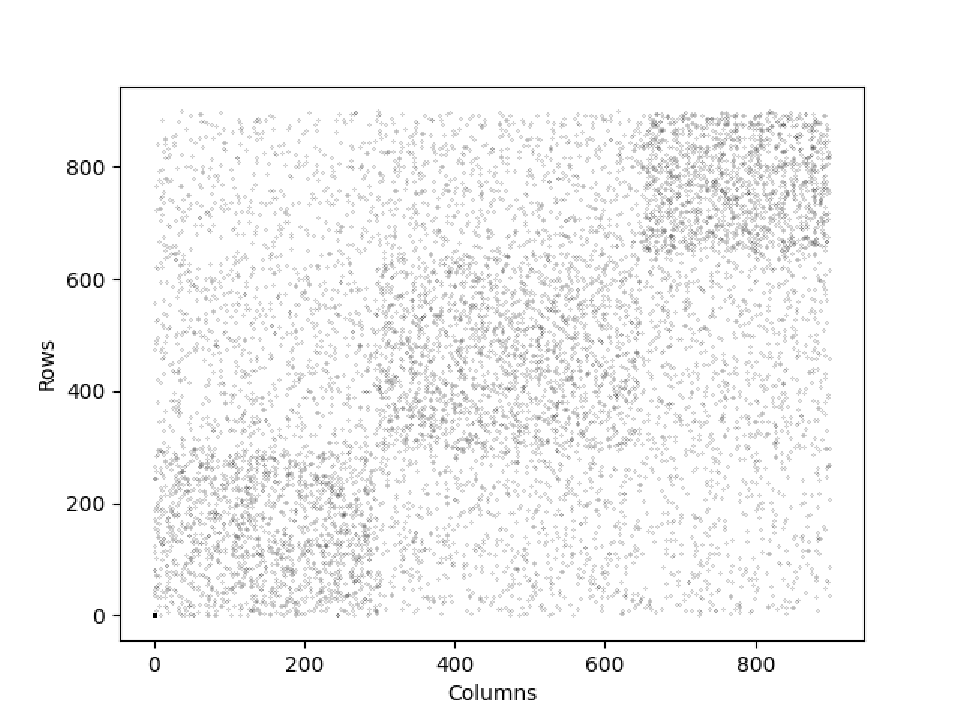}  
\end{minipage}
}
\subfigure[Spectrum of matrix $\K$.]{
\begin{minipage}{5.6cm}
\centering   
\includegraphics[scale=0.4]{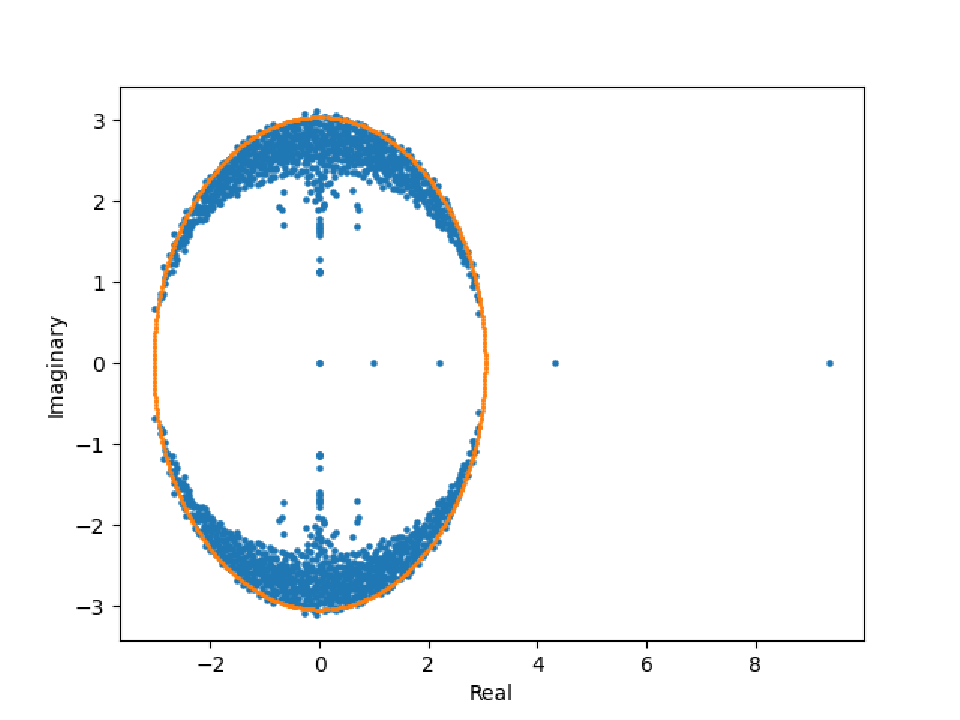}
\end{minipage}
}
\caption{$\beta_2<\beta=0.563<\beta_3$}    
\label{fig:s4}    
\end{figure}

\begin{figure}[htbp]
\centering  
\subfigure[Adjacency matrix (black: 1, white: 0).]{ 
\begin{minipage}{5.6cm}
\centering   
\includegraphics[scale=0.4]{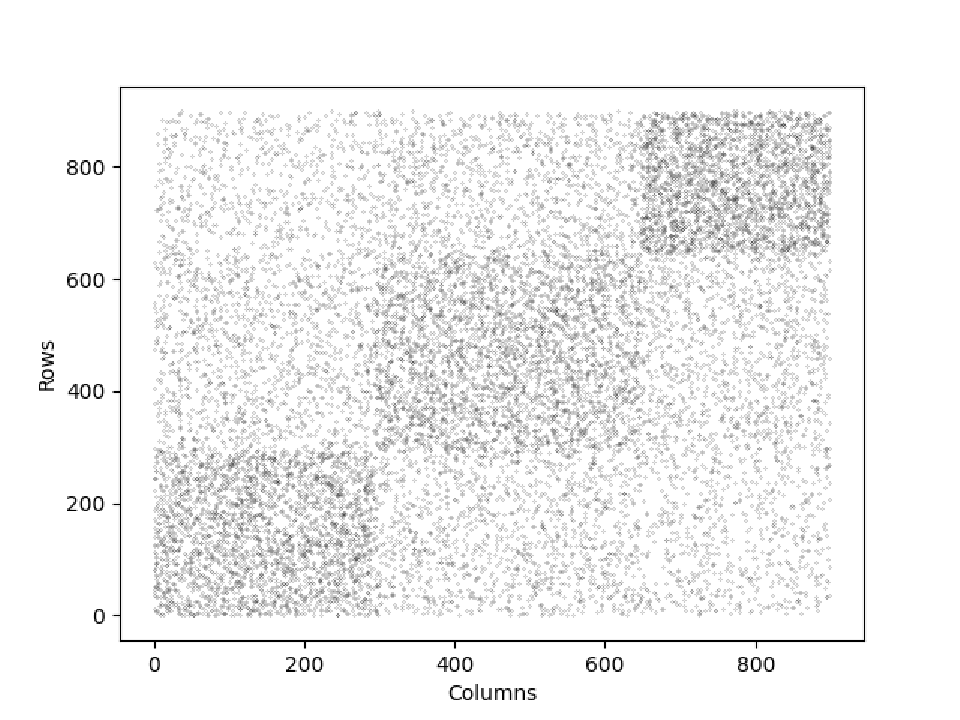} 
\end{minipage}
}
\subfigure[Spectrum of matrix $\K$.]{
\begin{minipage}{5.6cm}

\centering    
\includegraphics[scale=0.4]{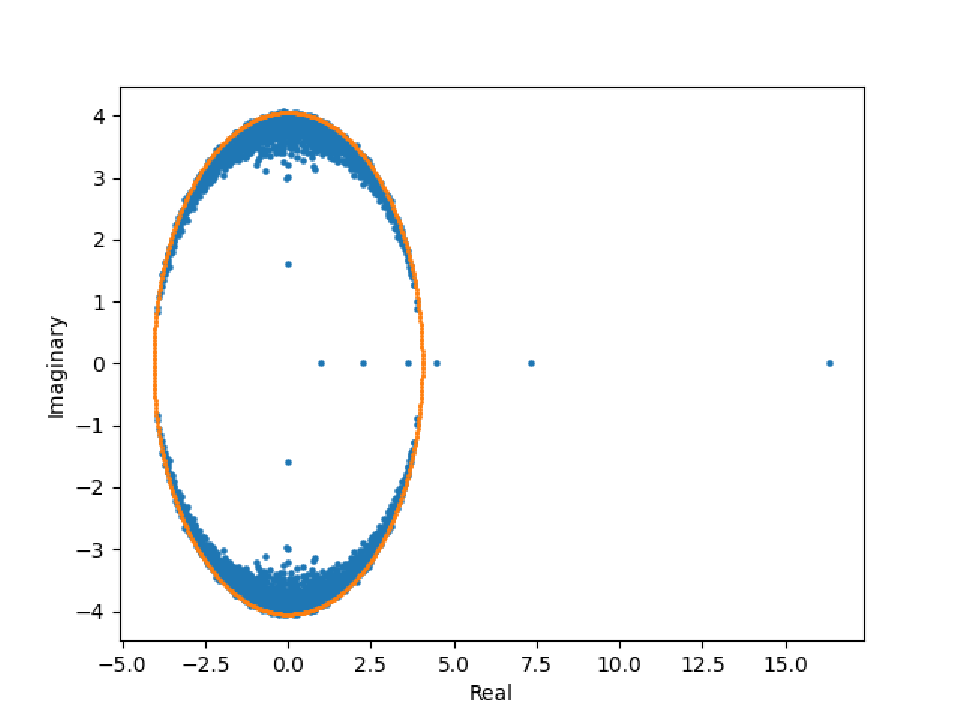}
\end{minipage}
}
\caption{$\beta=1$}   
\label{fig:s7}    
\end{figure}
 
Summarizing, at $\beta_1$, the giant component is formed. 
In~\cite{Keliger}, the effect of different seeding patterns is investigated
in context of the COVID19 pandemic in Hungary.
According to this, below a certain $\beta$, the central seeding 
is more dangerous than the uniform one. Indeed, below $\beta_1$
% the graph is not connected,
there are many small components,  among
which there is a central one. Above that, the
uniform seeding is more dangerous than the central one, as it is able to infect
the giant component and other components too.
However, if the seeds arrive in time, then actions can be done to separate
the most infected clusters as follows.

At $\beta_2$,
there are 2 well-distinguishable clusters, \dots, at 
$\beta_k$, there are $k$ well-distinguishable ones, since the
within-cluster bonds are strengthened. It does not mean that there are
no between-cluster bonds, but those are negligible
compared to the within-cluster ones.
Also, by deleting the between-cluster bonds, the infection can be confined
to the clusters. If the seed is in one cluster, the epidemic can be confined
to that cluster. So with increasing $\beta$, there is a chance to localize
the epidemic to one large city and to save the others.

For example, if $\beta$ passes over $\beta_2$, then 2 clusters can be
separated, and if the seed is in one of them, then the infection can be
localized to that cluster and we can save the other by deleting the
connecting bonds (even if we do not delete all, the epidemic will spread
to the other cluster with small probability only).
If $\beta$ passes over $\beta_3$, then 3 clusters can be
separated, and if the seed is in one of them, then the infection can be
localized to that cluster and we can save the others by deleting the
connecting bonds, etc.
Above $\beta_k$ the situation is similar to that of $\beta =1$, the uniform
seeding is dangerous, but seeds can be localized to the first infected cluster.
In the symmetric case, $\beta_2 =\dots =\beta_k$, so all these phase
transitions occur at the same time from the giant cluster to the $k$-cluster
scenario.
Multiple transitions are spectacular if the eigenvalues of $\B$ greater than
$\sqrt{c}$ are separated from each other and from $\sqrt{c}$.
Therefore, $\beta$ can as well be considered as a tuning parameter. 
Also note that in the $SBM_k$ model when $c_a =c$ $(a=1,\dots ,k )$,
then $\mu_1$ is close to $c$ and so, $\beta_1$ is close to $\frac1{\mu_1}$
(w.h.p.). This is always the case if $k=1$, and so, $\frac{c}{\mu_1^2} =
\frac1{\mu_1}$ as in~\cite{Newman22}.

\section{Application to large quantum chemistry networks}\label{appl}

Computational quantum chemistry aims at computing properties of molecules and materials based on quantum mechanics of many-electron systems,
see, e.g.~\cite{Arendas,Szabo}. In quantum mechanics, states of a system are states in a Hilbert space and relevant physical quantities correspond to some linear operators in such spaces. In a molecule, energy levels of
a many-electron system  are eigenvalues of the Hamiltonian operator, which is quantum version of a classical energy function also known as Hamilton function in classical mechanics. In case of many-electron molecules, the corresponding Hilbert space is an infinite dimensional functional space of so called many-electron wave-functions. For numerical computations, various finite dimensional subspaces may be used (see~\cite{Szabo}). In such subspaces, Hamiltonian operator is represented by the Hamiltonian matrix. The approximate energy levels of a molecule are the real eigenvalues of this matrix, which is real, symmetric.  

When a molecule has more than just few electrons, the corresponding subspace needs to be very large to achieve a good approximation. To a Hamiltonian matrix one can associate a weighted graph, in which absolute values of matrix-elements are weights between nodes and the nodes correspond to basis functions in the subspace. Finding structures in such a network can be useful. In  \cite{Mniszewski} it was suggested that finding network communities allow reduction of the matrix size needed for eigenvalue computations. The Hamiltonian matrix is typically sparse, each row with $n$ elements has approximately $\log^4n$ non-zero elements. As a result, in chemistry we naturally encounter very large and sparse weighted networks. Further, a skeleton graph of the corresponding weighted network is found by rounding non-zero elements to $1$s. 

As a sample we consider the Hamiltonian matrix of the LiH (Lithium Hydride) molecule with $7638$ rows and columns. The corresponding skeleton graph has average degree about $459.559$, which means that on average a row of the adjacency matrix has $94\%$ zero elements. The largest real eigenvalue of the matrix $\K$ is
about $464.473$, which is similar in magnitude to the average degree, an indicator of sparsity of the skeleton graph. The results are shown in Figure~\ref{lih}.
In panel (a), the $18$ largest eigenvalues of the matrix $\K$ are presented,
excluding the trivial top eigenvalue ($\lambda_1\approx 464.473$) that
mirrors the average degree of the skeleton graph. A large gap between the
$8$th and $9$th largest eigenvalues reveals the number of clusters,
which is $8$.
Panel (b) shows the clustering of the vertices in $8$ clusters, by the in-vectors, see Theorem~\ref{kvar}. Rows and columns of each cluster are stacked together.  Intensity of gray-level corresponds to density of $1$s, darker areas having larger density than lighter areas.

\begin{figure}[htbp]
\centering  
\subfigure[$18$ largest real eigenvalues ($\lambda_k$) of the matrix $\K$,
excluding the top eigenvalue ($\lambda_1\approx 464.473$). A large gap between the $8$th and $9$th largest eigenvalues reveal number of clusters,
which is $8$.]{  
\begin{minipage}{5.6cm}
\centering  
\includegraphics[scale=0.65]{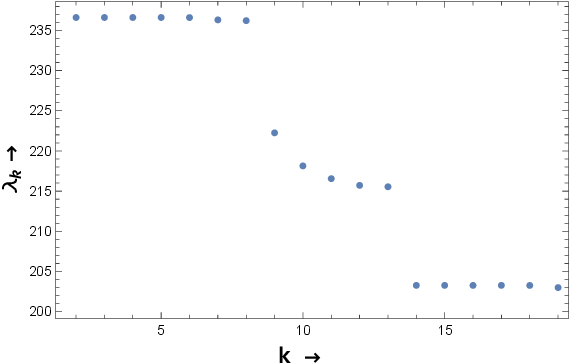} 
\vspace*{1mm}
\end{minipage}
}
\hfill
\subfigure[Clustering the vertices into $8$ clusters by the in-vectors,
illustrated by the corresponding subdivision of the adjacency matrix.  Rows and columns of each cluster are stacked together.  Intensity of gray-level corresponds to density of $1$s, darker areas having larger density than lighter ones.]{
\begin{minipage}{5.6cm}
\centering
\includegraphics[scale=0.49]{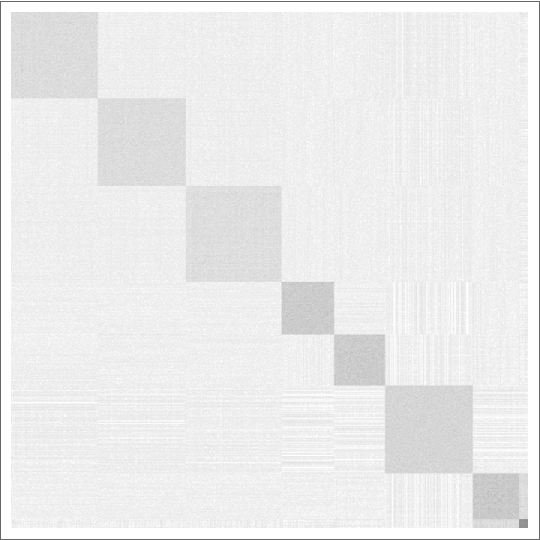}
\vspace*{2mm}
\end{minipage}
}
\caption{Spectral clustering of the skeleton graph of the LiH molecule chemical network with $7638$ nodes.} 
\label{lih}   
\end{figure}

\section{Conclusions and further perspectives}\label{conc}

The sparse stochastic block model 
behind an expanding graph sequence was considered. Here the spectral properties
of the adjacency matrix give less information for the clusters than in the
dense case of~\cite{Bolla20,LovSos}; instead, the non-backtracking spectra
are used.

Bond-percolation is considered in two senses:
the first is that the within- and between-cluster edge probabilities
decrease with $n$, and the second is that edges coming into existence in this 
way are retained with probability $\beta$. 
Similarities and differences between the BP and EM algorithms
are discussed. Both are based on an iteration algorithm, but former one 
solves a system of equations with the transmission probabilities between the
node pairs ($2mk$ non-linear equations and unknowns, where $k$ is the number 
of clusters),  while keeping the model parameters fixed;
latter one, for fixed $k$,
estimates the model parameters and the missing memberships of 
the nodes by using the classical EM iteration for mixtures.
Furthermore, via inflation--deflation techniques and findings of~\cite{Stephan},
we are able to classify the nodes of the sparse graph with representation
based k-means clustering, which gives rise to a sparse spectral clustering
procedure.
Simulation results for the so-called $\beta$-percolation are also considered,
and strategies are suggested for possible localization of a pandemic.

If the $c_{in}$ versus $c_{out}$ scenario does not hold, then in generic
assortative networks there are $k$ different
within-cluster and $k \choose 2$ different between-cluster affinities;
former ones,
denoted by $c_w$'s, are significantly ``larger'' than the latter ones,
denoted by $c_b$'s. In this case,
$c_1 =\dots =c_k =c$ is not
necessarily equivalent to $r_1 =\dots =r_k$, and the
Kesten--Stigum threshold should be developed by giving
lower bounds for every $c_w -c_b$ difference, which are positive
in assortative networks.
Finding such dense diagonal blocks is an important problem of
quantum chemistry, for which purpose, a large network is analyzed.
In the estimation, discrepancy techniques (see, e.g.,~\cite{Bolla16}) may help.

More generally, we plan to investigate non-backtracking spectra of
edge-weighted graphs, that can be considered as a generalized $\beta$-model:
the pandemic is submitted with probability $w_{ij}$ between individual pairs,
instead of the unique $\beta$, that gives rise to investigate an
${SBM}_k^{\W}$ model with edge-weight matrix $\W =(w_{ij})$.

%\vskip0.1cm
%\noindent
%  \textbf{Possible workflow in the unweighted case}

%  \begin{itemize}
%  \item
%    Find $k$ (initial number of clusters) based on the spectral gap in $\B$
 %   (suggested by BP).
 % \item
 %   Run the EM algorithm to estimate the parameters of the $k$-cluster model.
 %   Investigate hypotheses on the model fit (maximum likelihood ratio test,
 %   information theoretical criteria).
 % \item
 %   In case of ``good'' fit, find the clusters with the help of the
  %  vertex representatives (deflated $\B$-eigenvectors), segments of $\K$.
  % \end{itemize} 

%\section*{Author contributions}

%The theoretical parts of the paper with theorems and proofs were written by 
%Marianna Bolla. The Python code
%for calculating the non-backtracking spectra and generating random graphs from
%the $SBM_k$ model was written by Daniel Zhou. He also performed the
%$\beta$-percolation on the so generated random graphs.

\section*{Acknowledgments}

This research was supported by the NKFIH project Dynamical systems and
fractals, no.142169.
The second author acknowledges financial support by Business Finland
BF-COHQCA project.

The research was partly done under the auspices of the Budapest Semesters in 
Mathematics program, in the framework of a research course in the spring
semester 2023, where the third author made simulation experiments.
%The problem itself is based on the research of the DYNASNET group  
%(R\'enyi Institute of Mathematics, Budapest) related to
%bond-percolation and spreading epidemics in social networks. 
The first author is indebted to L\'aszl\'o Lov\'asz for valuable conversations
on this topic, and
also thanks Tam\'as M\'ori, Katalin Friedl and Daniel Keliger for their
useful remarks.

\end{document}